\documentclass{amsart}

\usepackage{amsmath, amsthm}
\usepackage{amssymb, latexsym}
\usepackage{amsfonts}
\usepackage[dvipsnames]{xcolor}

\newtheorem{theorem}{Theorem}[section]

\newtheorem{proposition}[theorem]{Proposition}
\newtheorem{lemma}[theorem]{Lemma}
\theoremstyle{definition}
\newtheorem{remark}[theorem]{Remark}

\newtheorem{example}[theorem]{Example}

\newcommand{\FFF}{\mathbb F}
\newcommand{\CCC}{\mathbb C}

\newcommand{\LL}{{\mathcal L}}

\newcommand{\semi}{\rtimes}

\def\unA{{\mathbb A}{\rm ut}(\LL)}

\begin{document}
\date{}


\title{On 2-transitive sets of equiangular lines}
 
 \author{Ulrich Dempwolff and William  M. Kantor}



\begin{abstract}
All finite sets of equiangular lines spanning finite-dimensional unitary spaces  
are determined for which the action on the lines of the set-stabilizer in the unitary group is
 2-transitive with a regular normal subgroup.

 \end{abstract}

\maketitle

\section{Introduction}
\label{Introduction}

The purpose of this note is to verify the following result.

\begin{theorem}
\label{Main}

Let $\LL$ be a $2$-transitive set of equiangular lines
in the unitary space $V$ and that the automorphism group
of $\LL$ has a regular normal subgroup.
Let $|{\LL}|=n$,
 $\dim V=d$, and  $1< d < n-1$.
Then:

 \begin{itemize}

\item[\rm(i)] $n=4$ and $d=2$. 
\item[\rm(ii)] $n=64$ and $d=8$ or $=56$.  
\item[\rm(iii)]  $n=2^{2m}$ and $d=2^{m-1}(2^m-1)$ or $=2^{m-1}(2^m+1)$
 for $m\geq 2$.
\item[\rm(iv)]  $n=p^{2m}$ and $d=p^m(p^m-1)/2$ or 
$=p^m(p^m+1)/2$  for a prime
$p>2$ and $m\ge1$. 

 \end{itemize} 
 For each pair $(n,d)$ in  {\rm(i)--(iv)}  
 there is a unique such  set  $\LL$ up to equivalence.

\end{theorem}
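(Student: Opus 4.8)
The plan is to reduce the problem to the classification of transitive linear groups and then to read off both the dimension $d$ and the configuration itself from the representation theory of the resulting group. First I would exploit that $\unA$ is $2$-transitive with a regular normal subgroup $N$: by Burnside's theorem $N$ is elementary abelian, so $n=|N|=p^{k}$ for a prime $p$, and the stabilizer $G_{0}$ of a point is a subgroup of $\mathrm{GL}(k,p)$ acting transitively on $N\setminus\{0\}$. Hering's classification of such transitive linear groups then provides a short explicit list of candidates for $G_{0}$. At the same time $2$-transitivity forces the configuration to be tight: the frame operator $\sum_{\ell\in\LL}P_{\ell}$ is $\unA$-invariant, hence scalar on the irreducible module $V$, so $\LL$ is an equiangular tight frame. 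Its Naimark complement is again a $2$-transitive equiangular tight frame of the same order $n$ in dimension $n-d$, which already explains why the two values $d$ and $n-d$ are paired in each of (i)--(iv).

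Next I would translate equiangularity into harmonic analysis on $N$. Realizing $\LL$ as a single $\unA$-orbit and normalizing the orbit representatives, the Gram matrix becomes $N$-invariant, so it is diagonalized by the characters $\hat N$, and $G_{0}$ acts on $\hat N$. The constant-modulus condition on off-diagonal inner products becomes the requirement that a $G_{0}$-invariant subset $O\subseteq\hat N$ have a flat indicator (a difference-set/bent-function condition), and one gets $d=|O|$. Running through Hering's list, I would show that the generic families (the $\mathrm{SL}$ and $G_{2}$ cases, and most parameters) do not support such a flat invariant set, and that the admissible configurations come precisely from the symplectic groups together with the two exceptional (extraspecial-normalizer / Clifford) entries of the list. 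For the symplectic families the set $O$ arises from a quadratic refinement of the invariant alternating form, and evaluating its size by a Gauss-sum computation yields $d=2^{m-1}(2^{m}\pm1)$ in (iii) and $d=p^{m}(p^{m}\pm1)/2$ in (iv). The two exceptional cases produce the sporadic sets, namely the dimension-$2$ and dimension-$8$ SICs of (i) and (ii), where $n=d^{2}$.

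Finally, for uniqueness I would argue that in each surviving case the fiducial datum---equivalently the invariant set $O$ together with the phases realizing it---is rigid: the $G_{0}$-module structure of $V$ and the flatness condition determine the distinguished vector up to the action of $\unA$ and a scalar, so $\LL$ is unique up to unitary equivalence. The main obstacle is precisely this last analysis together with the eliminations above: one must prove existence and uniqueness of the flat $G_{0}$-invariant configuration for every symplectic parameter and for the two sporadic cases, and exclude every remaining candidate on Hering's list. I expect this to require explicit work with the Weil and Heisenberg representations---Gauss-sum evaluations to verify flatness, and a Schur-type argument to pin down the fiducial---with the small dimensions $2$ and $8$ handled by separate direct computation.
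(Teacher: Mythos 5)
Your skeleton (Burnside forces $n=p^k$, Hering/Liebeck's list of transitive linear groups, the tight-frame/Naimark pairing of $d$ with $n-d$, a case analysis, rigidity of the fiducial) does match the paper's at a high level, but the central technical device you propose is unavailable in most of the cases. The reduction ``the Gram matrix is $N$-invariant, hence diagonalized by the characters $\widehat N$, and equiangularity becomes flatness of a $G_0$-invariant subset $O\subseteq\widehat N$ with $d=|O|$'' presupposes that the regular normal subgroup $N$ lifts to an \emph{abelian} subgroup of ${\rm U}(V)$ acting on $V$. In the paper's Lemma~\ref{Cases} the preimage $E$ of $N$ in $G$ is abelian only in the situation leading to case (iii); in cases (i), (ii) and (iv) $E$ is extraspecial (or the central product of an extraspecial group with $Z_4$), $N=E/Z(E)$ does not act on $V$ at all, and the Gram matrix is $N$-circulant only up to a nontrivial $2$-cocycle. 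Its spectrum is then not computed by a Fourier transform over $N$, and $d$ is not the cardinality of any flat invariant subset of $\widehat N$: in case (iv), for instance, $V\simeq W\otimes U^*$ for the $p^m$-dimensional Heisenberg module $W$ and a Weil module $U$ of ${\rm Sp}(2m,p)$. So the difference-set mechanism really carries only the family (iii) (essentially Taylor's two-graph picture); for the remaining cases one needs the representation theory of extraspecial groups (Lemma~\ref{Ex}), the Weil decomposition $W=W_+\perp W_-$, and --- to eliminate the ${\rm SL}(s,q)$ and large ${\rm G}_2$ entries --- lower bounds on degrees of projective representations (Lemma~\ref{Even}). Your flatness criterion does not substitute for any of this.

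The proposed elimination pattern over the list is also wrong in at least one material place: the ${\rm G}_2(2)'\simeq{\rm PSU}(3,3)$ entry is \emph{not} excluded --- it is precisely what produces case (ii) (Hoggar's $64$ lines in dimension $8$, together with the complementary $d=56$, which is not of the form $n=d^2$); attributing (i) and (ii) to ``extraspecial-normalizer entries'' of the list and discarding all ${\rm G}_2$ cases would lose this example. Conversely, the genuinely exceptional entries (${\rm SL}(2,3)$ and ${\rm SL}(2,5)$ in dimension $2$ over various primes, $A_6$ and $A_7$ on $16$ points, the $3^4$ extraspecial normalizer, ${\rm SL}(2,13)$ on $3^6$) must all be ruled out, and you give no mechanism for doing so; the paper handles them via the central involution $-1$ (killing all abelian cases with $p>2$), vanishing of ${\rm H}^1$ (forcing $p=2$ and eliminating $A_7$), and degree bounds plus an explicit character computation for ${\rm SL}(2,4)$. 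Finally, for uniqueness the generic ``Schur-type argument'' needs to be replaced by the precise multiplicity-one statement of Proposition~\ref{Irreducible} ($\lambda$ occurs exactly once in each of $V$ and $V'$), which is what actually pins the fiducial line down and lets one transport a line set between two abstractly isomorphic groups as in Lemma~\ref{Unique}.
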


A set  $\LL$ of \emph{equiangular lines}  in a 
unitary vector space $V$ is a set of   $1$-spaces 
that generates $V$ such that the
angle between any two members of $\LL$ is constant.
Two sets of lines are \emph{equivalent} if there
is a unitary transformation sending one set to the other.
The \emph{unitary automorphism group} $\unA$ of $\LL$
is the set  of unitary transformations sending $\LL$ to itself;
  the \emph{automorphism group} ${\rm Aut}\LL$
of $\LL$ is the group of permutations  of $\LL$ induced by $\unA$.  

We are assuming that ${\rm Aut}\,{\LL}$ is 
2-transitive. 
The case where ${\rm Aut}\,{\LL}$   has no regular normal subgroup occurs in \cite{IM}.  
The case $n=d^2$ is completely settled in \cite{Zhu}
producing (i), (ii) (and the case $n=3^2=d^2$ of (iv)),
while the corresponding question over the reals is implicitly dealt with in 
 in \cite{Ta2} (producing (iii)).
The assumption $1<d<n-1$ excludes degenerate examples (cf. \cite{IM}).

The proof of the theorem uses the classification of  the finite $2$-transitive
groups (a consequence of the classification of the finite simple groups),
together with mostly standard group theory and 
 representation theory. 
 We start with general observations concerning a 2-transitive line-set
 $\LL$ in a unitary space $V$.
In Section~\ref{Basic properties} we  show that 
$\unA = Z({\rm U}(V)) G$, where 
 $G$ is a finite group  $2$-transitive  on $\LL$; and then that
   $V$ is   an irreducible $G$-module.
 The stabilizer $H=G_\ell$ of   $\ell\in{\LL}$
has a linear character $\lambda$ such that, if  $W$  is  the module that  affords
the induced character $\lambda^G$, then 
  $W=V\oplus V'$ for a second irreducible $G$-module
$V'$ (Remark~\ref{RIrreducible}), 
which   explains why $2$-transitive line sets
{occur in pairs in the theorem}.  (See~\cite[p.~3]{IM}  for another 
explanation of this fact using Naimark complements.)
Then  we specialize to the case 
where  ${\rm Aut}\LL$ has a 2-transitive subgroup with a regular normal subgroup. 

Section~\ref{Group-theoretic background} contains  group-theoretic background and
Section~\ref{Examples}  describes the examples in  Theorem~\ref{Main}(iii) and (iv), while Section~\ref{proof}  contains the proof of  the theorem.
In the theorem  $\unA$ and ${\rm Aut}\,{\LL}$
are as follows:

\remark
\label{AutGroup}
For $\LL$ in  Theorem~\ref{Main}$,$
 $\unA =GZ$, $Z=Z({\rm U}(V))$ where $G=E\semi S$ with a $p$-group
 $E$  and  $H =G_\ell$  is  $Z(G)\times S$, $Z(G)=E\cap Z$.
In Section~\ref{proof}
 we prove that  
 the following hold for
the various cases in the theorem$:$

\begin{enumerate}
	\item[(i)]  $E=Q_8,$ 
	$|S|=3$ and $Z(G)=Z(E)$ has order $2;$

		\item[(ii)] $E$ is the central product of an extraspecial group of order
	$2^7$ with a cyclic group of order $4,$  
	$S\simeq {\rm G}_2(2)'\simeq {\rm PSU}(3,3)$
	and $Z(G)=Z(E)$ has order $4;$
		
	\item[(iii)]  $E$ elementary abelian of order $2^{2m+1},$
	 $S\simeq {\rm Sp}(2m,2)$ and $Z(G)= E\cap Z$ 
	 has order $2;$  and

	\item[(iv)]   
	$E$ is extraspecial of order $p^{2m+1}$ and exponent $p,\,$
	   $S\simeq {\rm Sp}(2m,p)$
	 and $Z(G)=Z(E)$ has order $p$.

\end{enumerate}

  
\section{Group theoretic background}
\label{Group-theoretic background}
 
Many facts of this section are basic and covered
in the books of Aschbacher~\cite{As},
 Huppert and Blackburn~\cite{HB}.
Our notation will follow the conventions of these references.
However we need the classification of the $2$-transitive
finite groups which is
is not elementary.
Such a group is either non-abelian quasisimple or it possesses a 
normal, regular subgroup. We are interested in the second case (the so-called
\emph{affine type}). For instance
Liebeck~\cite[Appendix~1]{Li} lists these groups:

\begin{lemma} 
\label{2tran}

Let $G$ be a finite $2$-transitive permutation group, $V\unlhd G$
 an
elementary abelian regular normal  subgroup 
of order $p^t $ for  a prime $p$. Identify $G$ with a group of affine transformations $x\mapsto  x^g+ c$ of $V=\FFF_p^t,$ where $g \in G_0$ and   $0, c\in  V$.
Then $G$ is a semidirect product $V\semi   G_0$
with $G_0\le {\rm GL}(V)$ ,and one of the following occurs:
\begin{enumerate}
\item  $G_0\le \Gamma {\rm L}(1, p^t).$ 
\item $G_0\unrhd {\rm SL}(s, q),$  $ q^s = p^t,$ $s> 2$.
\item $G_0\unrhd  {\rm Sp}(s, q),$ $ q^s = p^t.$   
\item $G_0\unrhd {\rm G}_2(q)', $ $q^6 = 2^t,$ where 
${\rm G}_2(q)<{\rm Sp}(6,q)\le {\rm Sp}(t,2)$.
\item $G_0$  is  $  A_6\simeq {\rm Sp}(4, 2)'$ or $A_7,$ $p^t=16$.
\item $G_0\unrhd {\rm SL}(2,3)$   with $t=2$ and
$p^t= 5^2,$ $ 7^2,$ $ 11^2 $ or $23^2$.
\item $G_0\unrhd   {\rm SL}(2,5)$   with $t=2$ and $p^t= 9^2,$ $
11^2, $ $19^2,$ $  29^2 $  or  $59^2  $.
\item $p^t = 3^4$   and $G_0 $    has a normal extraspecial subgroup $Q
=2^{1+4} $ such that 
$G_0= Q\semi  S  $ with $S\le {\rm O}^-(4,2)\simeq S_5$ and $|S|$ divisible by $5$.
\item $G_0'$  is  ${\rm SL}(2, 13),$  $p^t=3^6.$ 
\end{enumerate} 

\end{lemma}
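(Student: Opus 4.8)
The plan is to reduce the statement to the classification of \emph{transitive linear groups} and then to invoke Hering's theorem, whose proof rests on the classification of finite simple groups (CFSG).

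First I would make the elementary reduction. Since $V$ is an elementary abelian regular normal subgroup, the points of the action may be identified with the vectors of $V=\FFF_p^t$ so that $V$ acts by translations while the stabilizer $G_0$ of $0\in V$ acts by conjugation; on the abelian group $V$ this conjugation action is exactly a linear action, so $G_0\le\mathrm{GL}(V)$ and $G=V\semi G_0$. Because $G$ is $2$-transitive, $G_0$ must be transitive on the remaining $p^t-1$ points, i.e. $G_0$ is transitive on $V\setminus\{0\}$. Note that this already forces $V$ to be an irreducible $G_0$-module, since any nonzero $G_0$-submodule would contain a nonzero vector and hence all of them. Thus the lemma is equivalent to classifying the subgroups $G_0\le\mathrm{GL}(t,p)$ transitive on the nonzero vectors of $\FFF_p^t$, together with the (routine) translation into the affine description.

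Next I would split according to whether $G_0$ is solvable. For solvable $G_0$, Huppert's classification of solvable transitive linear groups applies, yielding the semilinear case (1), the case $\mathrm{SL}(2,3)\unlhd G_0$ of (6), and the solvable configurations inside the extraspecial-normalizer case (8). For non-solvable $G_0$ I would pass to the generalized Fitting subgroup $F^*(G_0)$ and isolate a quasisimple component $L$; the divisibility $p^t-1 \mid |G_0|$ together with Clifford theory applied to the irreducible module $V$ forces $V$ to be a (projectively) irreducible $L$-module of small dimension. Here CFSG enters: one runs through the possibilities for $L/Z(L)$ and their faithful representations of minimal degree in the relevant characteristic, and checks by order-divisibility and explicit orbit counting which ones can be transitive on $V\setminus\{0\}$. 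The surviving generic families are $\mathrm{SL}(s,q)$, $\mathrm{Sp}(s,q)$, and (in characteristic $2$) $\mathrm{G}_2(q)'$, giving cases (2)--(4), while the sporadic survivors yield $A_6,A_7$ of (5), $\mathrm{SL}(2,5)$ of (7), $\mathrm{SL}(2,13)$ of (9), and the non-solvable configurations of (8).

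The main obstacle is precisely this last case analysis for quasisimple $L$: it is genuinely CFSG-dependent and requires detailed control of the low-dimensional ordinary and modular representations of the quasisimple groups, plus careful bookkeeping to exclude every configuration outside the listed families. Since this is exactly Hering's theorem, in practice I would not reprove it from scratch but cite the explicit list, as the paper does via Liebeck's Appendix~1; the only genuinely new content required at this point is the short reduction of the first step.
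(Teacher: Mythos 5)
Your proposal matches the paper's treatment: the paper gives no independent proof of this lemma but simply cites the known classification of affine $2$-transitive groups (Hering's theorem, via Liebeck's Appendix~1), which is exactly what you do after the routine reduction to transitive linear groups. Your added sketch of the reduction and of where CFSG enters is accurate but not something the paper itself carries out.
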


\subsection{Some indecomposable modules}

Let $U$ be an elementary abelian $p$-group (written additively) and
$S\leq {\rm Aut}(U)$, i.e. we  consider $U$ as a faithful
$\FFF_pS$-module.
We say that that $U$ is \emph{indecomposable} if
$U$ is not the direct sum of two proper $S$-submodules.
We are interested in modules
with the following properties:

 
\medskip

\noindent
{\bf Hypothesis} (I)
\begin{enumerate}
	\item[({\rm I})] $U$ has a trivial $S$-submodule $0\neq U_0$ 
	 and $S$ acts transitively on the non-trivial
	elements of $V=U/U_0$. Moreover:
	
\begin{enumerate}
	\item The proper submodules of $U$ are contained $U_0$.

	\item $\dim U_0=1$.
\end{enumerate}
	
\end{enumerate}

The possible pairs $(S,V)$ are listed in 
Lemma~\ref{2tran} ($S$ in the role of $G_0$).
The module $U$ is an indecomposable module
which extends the trivial module by $V$ in case (I.a).


\begin{lemma}
\label{NonVan}

Let $U$ be an indecomposable $\FFF_pS$-module satisfying (I).
 Then $p=2$ and the following hold:

\begin{enumerate}

	\item[(a)] $\dim V=2m$, $m>1$,
	$S\simeq {\rm Sp}(2a,2^b)'$,  $m=ab$, or
  $\simeq {\rm G}_2(2^b)'$, $m=3b$.
  
  \item[(b)] $\dim V=3$, $S={\rm SL}(3,2)$.
  
  \item[(c)] The module $U$ exists in cases (a) and (b) and  is
unique up to equivalence.
 
  \item[(d)] Let $S\simeq {\rm Sp}(2a,2^b)'$,  $m=ab$, or
  $\simeq {\rm G}_2(2^b)'$, $m=3b$. Then $S$ has an
  embedding into a group $S^\star\simeq {\rm Sp}(2m,2)$ and
  $U$ is the restriction of
  the unique $\FFF_2S^\star$ (satisfying (I)) to $S$.

\end{enumerate}
 
\end{lemma}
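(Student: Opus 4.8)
\emph{The plan.} First I would translate the whole question into $1$-cohomology. Writing $U_0\cong\FFF_p$ (trivial and $1$-dimensional by (I.b)), an indecomposable $U$ satisfying (I) is exactly a non-split extension $0\to\FFF_p\to U\to V\to0$ in which $V$ is an irreducible $\FFF_pS$-module with $S$ transitive on $V\setminus\{0\}$: irreducibility of $V$ is forced by the transitivity, and granting it, condition (I.a) becomes automatic once the extension is non-split. Hence such a $U$ exists iff $\mathrm{Ext}^1_{\FFF_pS}(V,\FFF_p)\cong H^1(S,V^{*})$ is nonzero, and it is unique up to equivalence precisely when this group is $1$-dimensional over $K=\mathrm{End}_{\FFF_pS}(V)$, so that $\mathrm{Aut}_{\FFF_pS}(V)=K^{*}$ acts transitively on its nonzero classes. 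Since by Lemma~\ref{2tran} the pairs $(S,V)$ range over the transitive linear groups, the lemma reduces to deciding, case by case, when $H^1(S,V)\neq0$ and what its $K$-dimension is (the surviving $V$ are self-dual, so I may work with $V$ rather than $V^{*}$).

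\emph{Forcing $p=2$.} The key device is the vanishing criterion: if $z\in Z(S)$ has order prime to $p$ and acts on $V$ as a scalar $\lambda\in K\setminus\{1\}$, then $H^1(S,V)=0$. Indeed the inner automorphism of $S$ given by $z$ acts trivially on $H^1(S,V)$, while as an element of $K$ it acts by multiplication by $\lambda$; as $\lambda-1\in K^{*}$ this forces $H^1(S,V)=0$. In odd characteristic $z=-\mathrm{id}$ is available whenever $-\mathrm{id}\in S$, and this covers the symplectic groups of Lemma~\ref{2tran}(3) together with all exceptional families (6)--(9) (the central involution of each $\mathrm{SL}(2,\cdot)$, and the central involution of the extraspecial $2^{1+4}$ in (8), act as $-\mathrm{id}$). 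The linear groups (2) of trivial centre are handled instead by the classical vanishing $H^1(\mathrm{SL}(s,q),V_{\mathrm{nat}})=0$ for $s\ge3$, $(s,q)\neq(3,2)$, and case (1) by noting that the normal $p'$-subgroup $S\cap\FFF_{p^{t}}^{*}$ acts fixed-point-freely, whence $V^{S\cap\FFF_{p^{t}}^{*}}=0$ and $H^1(S,V)=0$. Thus no odd characteristic survives, giving $p=2$.

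\emph{Sorting the characteristic $2$ cases.} With $p=2$ I would run through Lemma~\ref{2tran} once more. For a normal subgroup $L\unlhd G_0$ acting nontrivially and irreducibly one has $V^{L}=0$, so inflation--restriction yields $H^1(G_0,V)\cong H^1(L,V)^{G_0/L}$ and lets me pass to the layer. The $\mathrm{SL}$-vanishing again kills (2) except $\mathrm{SL}(3,2)$, which gives (b) with its classical $H^1\cong\FFF_2$; the surviving families are the symplectic groups (3) and $\mathrm{G}_2(q)'$ (4), yielding (a). The smallest symplectic case $\mathrm{Sp}(2,2)\cong S_3$ has $H^1=0$ (its natural module restricts to a free module over a Sylow $2$-subgroup), which is why the bound $m>1$ appears. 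Finally $A_7$ in (5) must be excluded: restriction to $A_6\cong\mathrm{Sp}(4,2)'$ has odd index $7$ and is therefore injective, so $H^1(A_7,V)\hookrightarrow H^1(A_6,V)\cong\FFF_2$, and one must verify from the $2$-modular theory of $A_7$ that the nonzero $A_6$-class does not extend, i.e. $H^1(A_7,V)=0$.

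\emph{Existence, uniqueness and (d), and the main obstacle.} For $S\cong\mathrm{Sp}(2a,2^{b})'$ or $\mathrm{G}_2(2^{b})'$ I would produce $U$ through the embedding of part (d): the form $\mathrm{Tr}_{\FFF_{2^{b}}/\FFF_2}\!\circ B$ realises $S\le\mathrm{Sp}(2m,2)=S^{\star}$ (with $\mathrm{G}_2(2^{b})<\mathrm{Sp}(6,2^{b})\le\mathrm{Sp}(6b,2)$), and the orthogonal module of $\mathrm{O}(2m+1,2)\cong S^{\star}$ (a $1$-dimensional radical with natural quotient) restricts to $S$; since the field automorphism itself lies in $S^{\star}$, the restricted extension class is nonzero, so $U$ exists. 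Uniqueness (c) then follows once $H^1(S,V)$ is shown to be $1$-dimensional over $K=\FFF_{2^{b}}$ (respectively $\FFF_2$ for $\mathrm{SL}(3,2)$), since $\mathrm{Aut}_{\FFF_2S}(V)=K^{*}$ identifies all non-split extensions. \emph{The hard part will be exactly this last quantitative step}: the orthogonal module supplies non-vanishing and the embedding supplies (d) cheaply, but the bound $\dim_K H^1(S,V)\le1$ for the symplectic and $\mathrm{G}_2$ natural modules (and the companion fact $H^1(A_7,V)=0$) requires genuine representation-theoretic input---the known $H^1$-tables for finite groups of Lie type, or a direct study of the relevant Weyl modules---rather than the soft cohomological manipulations that suffice elsewhere.
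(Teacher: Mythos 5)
Your overall route is the same as the paper's: reduce (I) to non-vanishing of $H^1(S,V^*)$ via the extension/cocycle dictionary (the paper quotes \cite[(17.12)]{As}), kill odd characteristic with a central element acting as $-1$, run through the cases of Lemma~\ref{2tran} in characteristic $2$ citing the known $H^1$ computations (\cite{JP}), and obtain (d) by restricting the ${\rm O}(2m+1,2)$-module along $S\le{\rm Sp}(2m,2)$. Your case-(1)/(2) treatment for odd $p$ (fixed-point-free normal $p'$-subgroup, resp.\ ${\rm SL}$-vanishing) is in fact more careful than the paper's blanket appeal to a central involution, which is not literally available when $S={\rm SL}(s,q)$ with $s$ odd and $\gcd(s,q-1)$ odd.

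Two concrete soft spots. First, the $A_7$ exclusion: your odd-index restriction argument only pins $H^1(A_7,V)$ inside $H^1(A_6,V)\cong\FFF_2$, and you explicitly leave the final vanishing unverified. The paper closes this with a short self-contained count: a $5$-dimensional $\FFF_2 A_7$-module $U$ with $\dim C_U(S)=1$ has exactly $16$ hyperplanes meeting $C_U(S)$ trivially, and since the nontrivial transitive permutation degrees of $A_7$ below $16$ are $7$ and $15$, some such hyperplane is fixed, giving an invariant complement. Second, in (d) your justification that the restricted extension class is nonzero --- ``since the field automorphism itself lies in $S^\star$'' --- does not establish anything as stated. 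The working argument (implicit in the paper) is that $S$ is transitive on the nonzero vectors of $U/U_0$, whereas any $S$-invariant complement $W$ to the radical $U_0$ would carry the $S$-invariant nondegenerate quadratic form of the ${\rm O}(2m+1,2)$-space, which takes both values on $W\setminus\{0\}$ and so splits those vectors into two orbits; hence no complement exists and $U|_S$ is indecomposable. With those two repairs your proposal matches the paper's proof; the quantitative input $\dim_K H^1(S,V^*)=1$ is deferred to the literature in both.
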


Before we start the proof we recall a few basic facts about
group representations and cohomology.
Let $G$ be a finite group, $V$ be an $n$-dimensional $FG$-module associated
with the matrix representation $D:G\to {\rm GL}(n,F)$.
Define $D^*: G\to {\rm GL}(n,F)$ by $D^*(g):= D(g^{-1})^t$.
With respect to $D^*$ the space $V$ becomes a $G$-module,
\emph{the dual module $V^*$ of $V$}.

We describe the connection of the existence of 
indecomposable modules with cohomology of degree $1$ and
follow Aschbacher~\cite[Sec. 17]{As}.
Let $G$ be a finite group, $V$ a finite dimensional, faithful
$\FFF_pG$-module.
A mapping $\delta : G\to V$ is called a \emph{derivation or $1$-cocycle}
if $\delta (xy)=\delta (x)y+\delta (y)$ for all $x,y\in G$.
If $v\in V$, then $\delta_v$ defined by $\delta_v(x)= v- vx$
is a derivation too. Such derivations
are called \emph{inner derivations or $1$-coboundaries}.
The set ${\rm Z}^1(G,V)$ of derivations
and the set ${\rm B}^1(G,V)$ of inner derivations become
 elementary abelian $p$-groups
with respect to pointwise addition.
The factor group
$$
{\rm H}^1(G,V)={\rm Z}^1(G,V)/{\rm B}^1(G,V)
$$
is the \emph{first cohomology group of $G$ with respect to $V$}.

Suppose, $V$ is a simple $G$-module. By Schur's Lemma
$K={\rm End}_{\FFF_pG}(V)$ is a finite field, say $\simeq \FFF_{p^e}$,
$e\mid \dim V$. For $\kappa \in K$, $\delta$ a derivation define
$\delta\kappa :G\to V$ by $\delta\kappa (x)= \delta (x)\kappa$.
Then $\delta\kappa$ is a derivation
and $\delta_v\kappa=\delta_{v\kappa}$. So ${\rm Z}^1(G,V)$,
${\rm B}^1(G,V)$ and ${\rm H}^1(G,V)$ become $K$-spaces.
Usually ${\rm H}^1(G,V)$ is described by $\dim _K{\rm H}^1(G,V)$, i.e.
${\rm H}^1(G,V)$ has dimension $[K:\FFF_p] \cdot \dim _K{\rm H}^1(G,V)$
over $\FFF_p$.

By~\cite[(17.12)]{As} we have

\begin{enumerate}
	\item[(i)] There exists a $\FFF_pS$-module 
	with property (I.a) iff ${\rm H}^1(S,V^*)\neq 0$.
  
  \item[(ii)]  Every $\FFF_pS$-module 
	with property (I.a) is a quotient
  of a uniquely
  determined $\FFF_pS$-module
  $W$ with property (I.a) such that 
   $\dim C_{W}(G) =\dim {\rm H}^1(S,V^*)$. 

\end{enumerate}
If $V^*$ is simple then the module $W$ in (ii) is
even a $K$-module. So if $U$ satisfies (I) then
there exists a hyperplane $W_0$ of $C_{W}(G)$
such that $U\simeq W/W_0$. If $\dim _K{\rm H}^1(S,V^*)=1$,
then the multiplicative group of $K$ acts transitively
on the hyperplanes of $C_{W}(G)$, i.e. any two
modules satisfying (I) are isomorphic.


\begin{proof}
Assume $p>2$. Inspecting the cases of
 of Lemma~\ref{2tran}
we see that $Z(S)$ contains an involution $z$
which induces the scalar $-1$ on $V$. If $U_-$ denotes
the eigenspace of $z$ for the eigenvalue $-1$
then $U=U_0\oplus U_-$ is a $G$-decomposition, a contradiction.

So $p=2$ and we have to consider cases (2)-(5) of Lemma~\ref{2tran}
for $S$. Assume $\dim _{\FFF_p} V= p^t$.
In cases (2) - (4) $S\simeq {\rm SL}(a,2^b)$, $ab=t$,
${\rm Sp}(2a,2^b)'$, $2ab=t$, and ${\rm G}_2(2^b)'$, $3b=t$
and $V$ is the defining
$\FFF_{2^b}S$-module. 
In case (2) we get assertion (b) by~\cite{JP}.
In case (3) and (4) ${\rm H}^1(S,V^*)$ has dimension $1$ over
$\FFF_{2^b}$ by~\cite{JP}. 
It follows 
that a module with property (I) exists and is unique up
to isomorphism.
Assertions (a) and (c) hold.
In (5) with $S\simeq {\rm A}_7$ we have  ${\rm H}^1(S, W^*)= 0 $: 
Let $U$ be a $5$-dimensional $\FFF_2S$-module such that $\dim C_U(S)=1$
and $U/C_U(S)$ is simple. There are $16$ hyperplanes in $U$
that intersect $C_U(S)$ trivially. A permutation representation
of $S$ of degree $\leq 16$ has degree $1$,$7$ or $15$.
Hence $C_U(S)$ has an $S$-invariant complement in $U$.
 
 To (d): Recall that 
 $S\simeq {\rm Sp}(2a,2^b)'$~\cite[Hilfssatz 1]{Hu}, $ab=m$, and 
$\simeq {\rm G}_2(2^b)'$, $3b=m$~\cite[p.~513]{Li}, are
 subgroups of $S^\star = {\rm Sp}(2m,2)\simeq {\rm O}(2m+1,2)$ 
 and that the indecomposable
$S^\star$-module $U$  is the ${\rm O}(2m+1,2)$-module
\cite[pp.~55,~143]{Ta1}.
As $S$ acts transitively on
$V\simeq U/U_0$
we see that $U$ is indecomposable as an $S$-module.
\end{proof}



\subsection{On representations of extraspecial groups}
\label{SExtraspecial}

We consider:

\medskip

{\bf Hypothesis} (E) \ $p$ is a prime and $m\geq 1$ an integer.
If $p>2$ then $E$ is an extraspecial group of order $p^{1+2m}$
and exponent $p$ and if $p=2$ then $E$ is the central product
of an extraspecial  of order $2^{1+2m}$ with a cyclic group of
order $4$.

\medskip
Assume Hypothesis (E) and let
 $A=\{ \alpha \in {\rm Aut} (E) \mid \alpha_{Z(E)}=1_{Z(E)} \}$
be the centralizer of $Z(E)$ in the automorphism group.
Then (see~\cite{Gri},~\cite{Wi})
\begin{equation}
\label{Eaut}
A/{\rm Inn}(E) \simeq {\rm Sp}(2m,p).
\end{equation}
Denote by $\zeta_k=\exp( 2\pi i/k)$ a $k$-th primitive
root of unity. Assertions (a) and (b)
of  the next Lemma  are~\cite[(34.9)]{As},~\cite[Satz V.15.14]{HB}
whereas the last assertion follows from~\cite[Thm. 1]{Wi}. 

\begin{lemma}
\label{Ex}
Assume Hypothesis (E) and let
$U$ be a $p^m$-dimensional complex space.
 Set $Z(E)=\langle z\rangle$.

\begin{enumerate}
	\item[(a)] If $p=2$ there exists precisely two faithful, irreducible
	representations $D_j: E\to {\rm GL}(U)$, $j=1,3$.
	One has $D_j(z)=\zeta_4^j \cdot 1_U$.
	Every faithful, irreducible representation of $E$ is of this form.
	
	\item[(b)] If $p>2$ there exists precisely $p-1$ faithful, irreducible
	representations $D_j: E\to {\rm GL}(U)$, $1\leq j\leq p-1$.
	One has $D_j(z)=\zeta_p^j \cdot 1_U$.
	Every faithful, irreducible representation of $E$ is of this form.
 
\end{enumerate}
For each $j$ there is an automorphism $\gamma_j$ of $E$ such that $D_j$ can be defined by
$D_j(e)=  D_1(e^{\gamma_j})$ for all $e\in E,$ so $D_j(E)=D_1(E)$.
\end{lemma}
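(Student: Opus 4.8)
The plan is to reduce everything to the theory of representations with a fixed central character, exploiting that $E$ is (a cover of) a Heisenberg group over $\FFF_p$. First I would record the internal structure. For $p$ odd, $Z(E)=E'=\langle z\rangle$ is cyclic of order $p$ and $\bar E:=E/Z(E)$ is elementary abelian of order $p^{2m}$, the commutator map inducing a nondegenerate alternating form $\bar E\times\bar E\to Z(E)$. For $p=2$, writing $E=R\circ\langle c\rangle$ as the central product of an extraspecial group $R$ of order $2^{1+2m}$ with $\langle c\rangle\simeq C_4$ amalgamated along $Z(R)=\langle c^2\rangle$, I set $z:=c$, so $Z(E)=\langle z\rangle$ has order $4$, $E'=\langle z^2\rangle$ has order $2$, and $\bar E=E/Z(E)\simeq R/Z(R)$ is elementary abelian of order $2^{2m}$ with the induced symplectic form. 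In both cases $\bar E$ is abelian and $Z(E)$ is cyclic.

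Second, I would pin down degrees and count. An irreducible $D\colon E\to{\rm GL}(U)$ is faithful iff $z$ is mapped to a primitive $|z|$-th root of unity: $\ker D$ is normal, and every nontrivial normal subgroup of the $p$-group $E$ meets $Z(E)$, so $\ker D=1$ forces $D(z)$ to have order $|z|$; conversely if $D(z)$ has order $|z|$ then $\ker D\cap Z(E)=1$, whence $\ker D=1$. Thus the admissible central characters are $z\mapsto\zeta_p^{\,j}$ with $1\le j\le p-1$ for $p$ odd, and $z\mapsto\zeta_4^{\,j}$ with $j\in\{1,3\}$ for $p=2$. Every faithful irreducible has degree $\sqrt{|E:Z(E)|}=p^{m}$, being induced from a linear character of a maximal abelian subgroup $M$ with $|E:M|=p^m$. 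Now I invoke the standard isotypic-component identity $\sum_{\chi|_{Z(E)}=\lambda}(\dim\chi)^2=|E:Z(E)|=p^{2m}$, valid for each linear character $\lambda$ of the central subgroup; since each faithful irreducible with central character $\lambda$ contributes $(p^m)^2=p^{2m}$, there is exactly one such irreducible per nontrivial $\lambda$. This yields the counts ($p-1$ resp.\ $2$ faithful irreducibles) and the central values $D_j(z)=\zeta_p^{\,j}1_U$ resp.\ $\zeta_4^{\,j}1_U$ of parts (a) and (b).

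Third, the last assertion. For each admissible $j$ I would exhibit an automorphism $\gamma_j\in{\rm Aut}(E)$ with $\gamma_j(z)=z^{j}$. For $p$ odd such a $\gamma_j$ is a lift of a symplectic similitude of $\bar E$ with multiplier $j$ (these exist for every $j\in\FFF_p^{*}$), and the scaling of the commutator form by $j$ forces $z\mapsto z^j$; this surjectivity of ${\rm Aut}(E)\to{\rm Aut}(Z(E))$ is part of the structure recorded in~(\ref{Eaut}) and \cite{Wi}. For $p=2$ only $\gamma_3$ is needed, namely the explicit automorphism fixing $R$ pointwise and sending $c\mapsto c^{-1}$, which one checks is well defined on the central product $R\circ\langle c\rangle$ and inverts $z$. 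In each case $D_1\circ\gamma_j$ is a faithful irreducible representation with central character $z\mapsto D_1(z^{j})=\zeta_{|z|}^{\,j}$, hence equivalent to $D_j$ by the uniqueness just established. Defining $D_j(e):=D_1(e^{\gamma_j})$ therefore realizes $D_j$ and gives $D_j(E)=D_1(\gamma_j(E))=D_1(E)$ since $\gamma_j$ is bijective.

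The parts needing genuine input are the uniqueness of the irreducible with a prescribed nontrivial central character — which I would obtain cleanly from the isotypic-component identity above rather than from an explicit Stone--von Neumann argument — and the existence of the scaling automorphisms $\gamma_j$, which for $p$ odd is exactly the surjectivity of ${\rm Aut}(E)$ onto ${\rm Aut}(Z(E))$ furnished by Winter's theorem. The most error-prone bookkeeping is the $p=2$ central-product case: one must keep $z$ of order $4$, $E'=\langle z^2\rangle$ of order $2$, verify that precisely $j\in\{1,3\}$ give faithful representations, and confirm that the inversion $c\mapsto c^{-1}$ descends to a genuine automorphism of $E$.
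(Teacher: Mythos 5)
Your argument is correct, but it is worth noting that the paper does not prove this lemma at all: parts (a) and (b) are quoted from \cite[(34.9)]{As} and \cite[Satz V.15.14]{HB}, and the final assertion is attributed to \cite[Thm.~1]{Wi}. What you supply is a self-contained derivation of the same facts, and it is essentially the standard Stone--von Neumann argument that underlies those references. Your key points all check out: faithfulness of an irreducible of a $p$-group is equivalent to the central element $z$ acting with full order (every nontrivial normal subgroup meets the cyclic center); the isotypic identity $\sum_{\chi|_{Z(E)}=\chi(1)\lambda}\chi(1)^2=|E:Z(E)|=p^{2m}$ together with one irreducible of degree $p^m$ induced from a maximal abelian subgroup (irreducible by Mackey, since nondegeneracy of the commutator form on $E/Z(E)$ and faithfulness of $\lambda$ on $Z(E)$ make the inertia group equal to that subgroup) forces uniqueness per faithful central character; and the counts $p-1$ resp.\ $2$ follow since for $p$ odd every nontrivial character of $Z(E)$ is faithful, while for $p=2$ only $j\in\{1,3\}$ give $D(z)$ of order $4$. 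For the last assertion your route coincides with the paper's appeal to Winter: surjectivity of ${\rm Aut}(E)\to{\rm Aut}(Z(E))$, realized for odd $p$ by similitudes with multiplier $j$ (using exponent $p$ to lift them) and for $p=2$ by the inversion $c\mapsto c^{-1}$ on the $C_4$ factor, which indeed descends to the central product because $c^{-2}=c^{2}$ fixes the amalgamated subgroup. Two small points of care: your blanket claim that \emph{every} faithful irreducible has degree $p^m$ is asserted before it is needed, but your logic survives without it, since a single degree-$p^m$ constituent already exhausts the isotypic sum; and the one place where a citation is genuinely hard to avoid by pure bookkeeping is the precise structure of ${\rm Aut}(E)$ for odd $p$, where you correctly fall back on \cite{Wi}, just as the paper does.
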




\subsection{Basic properties of $2$-transitive line sets}
\label{Basic properties}

In this subsection $\LL$ denotes a 
$2$-transitive set of $n$ equiangular lines
in a a unitary space
$V$ of dimension $d<n$. 
Let $K$ be the kernel of the permutation action of 
$\unA$ on $\LL,$ which  clearly contains $Z :=Z({\rm U}(V))$.

\begin{lemma}
\label{Kernel}

$K= Z$.

\end{lemma}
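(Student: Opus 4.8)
The plan is to establish the two inclusions separately. The inclusion $Z\subseteq K$ is immediate: a scalar transformation $\mu\cdot 1_V$ fixes every $1$-space of $V$, and in particular every line of $\LL$, so it lies in the kernel of the action on $\LL$. The substance of the lemma is therefore the reverse inclusion $K\subseteq Z$, i.e.\ I must show that a unitary map fixing each line of $\LL$ setwise is forced to be a scalar.

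To this end, take $g\in K$. Since $g$ fixes every $\ell\in\LL$, it acts on the $1$-dimensional space $\ell$ as multiplication by a scalar $\mu_\ell$, and $|\mu_\ell|=1$ because $g$ is unitary (it preserves the norm of a unit vector spanning $\ell$). The goal is to prove that all the $\mu_\ell$ coincide; once this is known, writing $\mu$ for the common value, $g$ agrees with $\mu\cdot 1_V$ on a set of lines that spans $V$ (recall $\LL$ generates $V$), whence $g=\mu\cdot 1_V\in Z$.

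The mechanism forcing the scalars to agree is the interaction of equiangularity with unitarity. Fix unit vectors $v_\ell$ spanning the lines $\ell$, and let $c$ be the common value of $|\langle v_\ell,v_{\ell'}\rangle|$ for distinct $\ell,\ell'$. For any two distinct lines I would compute, using $gv_\ell=\mu_\ell v_\ell$ and the $g$-invariance of the inner product,
\begin{equation*}
\langle v_\ell,v_{\ell'}\rangle=\langle gv_\ell,gv_{\ell'}\rangle=\mu_\ell\overline{\mu_{\ell'}}\,\langle v_\ell,v_{\ell'}\rangle .
\end{equation*}
Provided $\langle v_\ell,v_{\ell'}\rangle\neq0$, this yields $\mu_\ell\overline{\mu_{\ell'}}=1$, and since $|\mu_{\ell'}|=1$ I conclude $\mu_\ell=\mu_{\ell'}$.

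The only point that genuinely uses the hypotheses is the non-vanishing $\langle v_\ell,v_{\ell'}\rangle\neq0$, i.e.\ $c\neq0$, and this is the step I would single out as the crux. It is exactly where $d<n$ enters: were $c=0$, the members of $\LL$ would be pairwise orthogonal, forcing $n=|\LL|\le\dim V=d$, contrary to $1<d<n-1$. Hence $c\neq0$, the displayed computation applies to every pair of distinct lines, all the $\mu_\ell$ are equal, and $g$ is a scalar. Note that neither $2$-transitivity nor any representation theory is required here; the argument needs only that $\LL$ is an equiangular spanning set with more lines than the dimension.
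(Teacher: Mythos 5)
Your proof is correct, but it takes a genuinely different route from the paper's. You exploit the unitary and equiangular structure directly: $g\in K$ multiplies each line by a unimodular scalar $\mu_\ell$, invariance of the inner product forces $\mu_\ell\overline{\mu_{\ell'}}=1$ whenever $\langle v_\ell,v_{\ell'}\rangle\neq 0$, and the common angle cannot be $0$ because $n>d$ rules out $n$ pairwise orthogonal lines; spanning then gives $g\in Z$. The paper instead argues with linear dependency relations: it fixes a relation $\sum_i a_iv_i=0$ of minimal support, notes that applying $g$ yields a relation with the same support which by minimality must be proportional to the first, so $g$ acts by one constant scalar on all lines in that support; 2-transitivity (which permutes the minimal relations) then places any pair of lines inside the support of some minimal relation, and spanning finishes. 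The hypotheses consumed are complementary: your argument needs equiangularity and unitarity but no transitivity whatsoever, so it proves the scalar-kernel statement for an arbitrary equiangular spanning set with $n>d$; the paper's argument needs neither equiangularity nor the inner product (it works for linear automorphisms of any spanning line set with $n>d$) but does use the 2-transitivity to connect all pairs. Both are complete proofs in the setting of the lemma.
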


\begin{proof}
Let $g\in K$.
 Let $m$ be the minimal number of nonzero  $a_i$
  in a  dependency relation $\sum_i a_iv_i=0$,  $\langle v_i\rangle \in \LL$.
   Apply $g$ to obtain another  dependency relation   
   $\sum_i k_ia_iv_i=0$ with the same
   $m$ nonzero $k_ia_i$;    these    relations must be multiples of one another by minimality.  
Thus, restricting to nonzero  $a_i$ produces
  constant $k_i$.
  
Any two different members $\langle v_i\rangle, \langle v_j\rangle$ 
  of  $\LL$   occur with nonzero coefficients in such a relation.
Then $g$ acts on all  members of $\LL$ with the same~scalar, and so
 is a scalar transformation  since $\LL$ spans $V$.
\end{proof}

\begin{lemma}
\label{Finite}
 
There is a finite group $G$ such that $\unA =GZ$.
\end{lemma}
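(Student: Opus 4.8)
The plan is to exploit Lemma~\ref{Kernel}: since the kernel of the permutation action of $\unA$ on $\LL$ is exactly $Z$, the quotient $\unA/Z \cong {\rm Aut}\,\LL$ is a subgroup of the symmetric group on the $n$-element set $\LL$, hence \emph{finite}. Thus $\unA$ is an infinite group (it contains the circle $Z$ of unitary scalars) that is finite modulo its center, and the whole point is to produce a finite subgroup $G$ that still surjects onto $\unA/Z$, so that $\unA = GZ$. The naive attempt—lift a generating set of $\unA/Z$ to $\unA$—fails, because the subgroup generated by finitely many preimages can perfectly well be infinite; this is precisely the main obstacle, and it is what forces a structural argument rather than a direct construction.

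To control the situation I would first record that $Z \le Z(\unA)$ (scalars are central in ${\rm U}(V)$, and they visibly preserve $\LL$, so $Z \le \unA$) and that $[\unA : Z] = |{\rm Aut}\,\LL| < \infty$. Consequently $\unA/Z(\unA)$ is finite, and \textbf{Schur's theorem} applies: the derived subgroup $\unA' := [\unA,\unA]$ is finite. This disposes of the ``noncommutative part'' of $\unA$ in one stroke, reducing the problem to understanding the abelian quotient $\unA/\unA'$.

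It remains to cover $\unA/\unA'$ by a finite subgroup. I would set $\bar{Z} := Z\unA'/\unA'$ inside the abelian group $\bar{A} := \unA/\unA'$. Here $\bar{Z}$ is a quotient of the circle group $Z$, hence \emph{divisible}, and the sequence
\[
1 \longrightarrow \bar{Z} \longrightarrow \bar{A} \longrightarrow \unA/(Z\unA') \longrightarrow 1
\]
has finite cokernel, since $\unA/(Z\unA')$ is a quotient of the finite group $\unA/Z$. Because divisible abelian groups are injective $\ZZZ$-modules, the subgroup $\bar{Z}$ is a direct summand: $\bar{A} = \bar{Z} \oplus C$ with $C \cong \unA/(Z\unA')$ finite. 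Pulling $C$ back to $\unA$ gives a subgroup $G$ containing $\unA'$ with $G/\unA' \cong C$ finite; as $\unA'$ is finite by Schur, $G$ is finite. Finally, $GZ$ maps onto $C \oplus \bar{Z} = \bar{A}$ and contains $\unA' $, so $GZ = \unA$, as required. The only non-elementary inputs are Schur's theorem and the injectivity (divisibility) of the scalar circle, both standard; the conceptual difficulty, as noted, is recognizing that these two facts together convert ``finite modulo scalars'' into the desired finite complement-up-to-$Z$.
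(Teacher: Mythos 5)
Your proof is correct, and it rests on the same two essential ingredients as the paper's — Schur's theorem applied to the centre-by-finite group $\unA$ (the paper cites \cite[(33.9)]{As} to get that $D=\unA'$ is finite) and the divisibility of the scalar circle $Z$ — but you combine them by a genuinely different mechanism. The paper stays inside $\unA$ and argues by maximality: it picks a finite subgroup $G\supseteq D$ for which $GZ/Z$ has maximal order in the finite group $\unA/Z$, and if $GZ<\unA$ it takes $h\in \unA-GZ$, uses divisibility of $Z$ to solve $h^m=z^{-m}$ and replace $h$ by the finite-order element $hz$, and observes that $[G,hz]\subseteq D\le G$ makes $\langle G,hz\rangle$ a strictly larger finite subgroup modulo $Z$, a contradiction. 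You instead pass to the abelianization $\unA/\unA'$, note that the image of $Z$ there is divisible hence an injective $\ZZZ$-module, split it off as a direct summand with finite complement $C$, and pull $C$ back. Your route produces the required finite subgroup in one structural step and avoids the greedy enlargement argument; the paper's route is more elementary in that the only property of the circle it uses is solvability of $x^m=h^{-m}$, with no appeal to injectivity of divisible groups. Both arguments are complete and of comparable length.
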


\begin{proof}
By \cite[(33.9)]{As}, $D=\unA'$ is finite.  Let $D\leq G$ be a finite group such that $GZ/Z$ has 

maximal order in the finite group ${\rm Aut}\LL= \unA /Z$.   
If $GZ<\unA$ let 
$h\in \unA -GZ$.
Then $h^m\in Z$ for some integer $m$, so there is $z\in Z$ such that
$h^m=z^{-m}$.  Since $[G,hz]\subseteq D\le G$
we get $|\langle G, hz\rangle|<\infty$
and $GZ/Z< \langle G, h\rangle Z/Z= \langle G, hz\rangle/Z$,
a contradiction. 
\end{proof}


\begin{proposition}
\label{Irreducible}
Let $H=G_\ell$, $\ell\in {\LL}$
be the stabilizer of a line.
Let $\lambda$ be the linear character of $H$ afforded by 
$\ell$.
Then:
\begin{enumerate}
	\item[(a)] $V$ is simple and
	a constituent of the module $W$ which affords $\lambda^G$.
	\item[(b)] $W=V\oplus V'$ with a simple module $V'$
	inequivalent to $V$.
	\item[(c)]  $V$ and $V'$ as $H$-modules afford $\lambda$ with
	multiplicity $1$.
\end{enumerate}

\end{proposition}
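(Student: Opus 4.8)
The plan is to determine the module $W$ affording $\lambda^G$ completely by computing $\langle\lambda^G,\lambda^G\rangle_G$ from the $2$-transitivity of $G$, and to locate $V$ inside $W$ by an explicit $G$-homomorphism; these two facts together will force $V$ to be simple and $W$ to split as in (b).

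First I would record that $G$ itself acts $2$-transitively on $\LL$: by Lemma~\ref{Kernel} the kernel of the $\unA$-action on $\LL$ is $Z$, and by Lemma~\ref{Finite} we have $\unA=GZ$, so the image of $G$ in ${\rm Aut}\,\LL=\unA/Z$ is the full (2-transitive) group ${\rm Aut}\,\LL$. Since $H=G_\ell$ and $[G:H]=|\LL|=n$, the number of $(H,H)$-double cosets in $G$ equals the number of $H$-orbits on $G/H\cong\LL$, which is $2$ (the fixed line $\ell$ together with its complement). Applying Mackey's decomposition of $(\lambda^G)|_H$ over the two double cosets $H$ and $HgH$, where $g\ell\neq\ell$, and then Frobenius reciprocity, I obtain $\langle\lambda^G,\lambda^G\rangle_G=\langle\lambda,(\lambda^G)|_H\rangle_H=1+\langle\lambda|_{H\cap H^g},\lambda^g|_{H\cap H^g}\rangle_{H\cap H^g}$, whose last summand is an inner product of two linear characters of $H\cap H^g$ and hence equals $0$ or $1$. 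Thus $\langle\lambda^G,\lambda^G\rangle_G\in\{1,2\}$.

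Next I would construct the $G$-map $\phi\colon W\to V$ realizing $V$ as a quotient of $W$. Writing $\ell=\langle v\rangle$ with $hv=\lambda(h)v$ for $h\in H$, the assignment $1\mapsto v$ is an $H$-homomorphism $\CCC_\lambda\to V|_H$, and by the adjunction between induction and restriction it extends to the $G$-map $\phi(g\otimes 1)=gv$ on $W={\rm Ind}_H^G\CCC_\lambda$. Its image is a $G$-submodule containing $v$, hence a nonzero vector on each line of $\LL$; since $\LL$ spans $V$, the map $\phi$ is onto $V$. To finish, I combine the two computations. If $\langle\lambda^G,\lambda^G\rangle_G=1$ then $W$ is simple, so the surjection $\phi$ gives $V\cong W$ and $\dim V=n$, contradicting $d<n$; therefore $\langle\lambda^G,\lambda^G\rangle_G=2$, and as $2$ is a sum of squares only as $1+1$, we get $W=V_1\oplus V_2$ with $V_1\not\cong V_2$ both simple. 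Since $V$ is semisimple (Maschke) and a nonzero quotient of $W$ with $V\neq W$ by dimension, $V$ is isomorphic to $V_1$ or $V_2$; this gives (a), and taking $V'$ to be the remaining constituent gives (b). For (c), Frobenius reciprocity identifies the multiplicity of $\lambda$ in $V|_H$ with the multiplicity of $V$ in $W$, namely $1$, and likewise for $V'$.

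The main obstacle is the double-coset step: one must read off that exactly two $(H,H)$-double cosets occur and that the alternative value $\langle\lambda^G,\lambda^G\rangle_G=1$ is genuinely excluded. This is precisely where the hypothesis $d<n$ and the surjection $\phi$ enter, converting a purely character-theoretic dichotomy into the geometric conclusion that $V$ is one of the two simple summands of $W$.
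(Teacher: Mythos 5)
Your proof is correct, and while it shares the paper's central character computation, it resolves the key structural step by a genuinely different mechanism. Both arguments use Mackey's decomposition over the two $(H,H)$-double cosets together with Frobenius reciprocity to get $(\lambda^G,\lambda^G)\in\{1,2\}$, and both use $d<n$ to exclude the value $1$. But the paper then works from the other end: it decomposes $V=V_1\oplus\cdots\oplus V_r$ into simple $G$-modules, observes that all components $v_i$ of a generating vector $v$ are nonzero (since $\LL$ spans $V$), deduces that $\lambda$ occurs in each $(\chi_i)_H$ and hence that each $\chi_i$ is a constituent of $\lambda^G$, and finally eliminates $r\ge 2$ by a dimension count when the constituents are distinct and, in the delicate case of two isomorphic constituents, by exhibiting a proper diagonal submodule $V_1(1+a^{-1}\phi)\oplus V_3\oplus\cdots$ that would contain $\langle\LL\rangle$. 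You instead invoke the adjunction $\mathrm{Hom}_G(W,V)\cong\mathrm{Hom}_H(\CCC_\lambda,V|_H)$ to produce a canonical $G$-map $W\to V$, which is surjective precisely because $\LL$ spans $V$ (equivalently, $V$ is cyclic as a $G$-module, generated by $v$), and then read off that the only nonzero proper quotients of the multiplicity-free module $W=V_1\oplus V_2$ are $V_1$ and $V_2$. This bypasses the paper's most intricate step (the isomorphic-constituents case) entirely and uses the spanning hypothesis only once, as surjectivity of the canonical map; the paper's version stays at the level of characters and explicit vectors, while yours trades that for the module-theoretic form of Frobenius reciprocity and is shorter and cleaner for it. Your derivation of (c) by Frobenius reciprocity coincides with the paper's.
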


\begin{proof} 
By 2-transitivity,  $G=H\cup HtH$ for $t\in G-H$.
Assume that $V=V_1\oplus\cdots\oplus V_r$ for simple $G$-modules $V_i$.
Let $\chi_i$ be the character of $V_i$.

Let $\ell =\langle v\rangle$.
If $v=v_1+\cdots +v_r$ with $v_i\in V_i$ then each $v_i\ne0$ since 
$\langle \LL \rangle =V$.  
As $\lambda(h)v=\lambda(h)v_1+\cdots +\lambda(h)v_r$ 
for $h\in H$, $\lambda$ is a constituent of
$(\chi_i)_H$.
By Frobenius Reciprocity, each $\chi_i$ is a constituent of
$\lambda^G$.

We claim that \em $\lambda^G=\psi_1+\psi_2$ for distinct irreducible characters 
$\psi_i$ of $G$.  \rm
 For, by  Mackey's Theorem \cite[p.~557]{HB},
$(\lambda^G)_H=((\lambda^{1^{-1}}) _{H\cap H^1})^H   + 
((\lambda^{t^{-1}} )_{H\cap H^t}) ^H.
$
By Frobenius Reciprocity,  $(\lambda^G,  \lambda^G)=
(\lambda, (\lambda^G)_H) = 1+(\lambda,((\lambda^{t^{-1}} )_{H\cap H^t}) ^H)$ 
and 
$(\lambda,((\lambda^{t^{-1}} )_{H\cap H^t}) ^H)=
(\lambda _{H\cap H^t},(\lambda^{t^{-1}} )_{H\cap H^t})$.
Hence $(\lambda^G,  \lambda^G)=1$
or 2.  If $\lambda^G$
is irreducible then each $\chi_i=\lambda^G$,  so 
$d=r\lambda^G(1)=r|\LL|\ge n$.   This contradiction proves the claim.
By Frobenius Reciprocity, $(\lambda,(\psi_i)_H)=1$ for $i=1,2$.
Then (a) - (c) follow if $r=1$.

So assume $r>1$. Each $\chi_i$ is in
 $\{ \psi_1,\psi_2 \}.$   
 If $ \{ \chi_1,\chi_2 \}=\{ \psi_1,\psi_2 \}$ then $d\ge \chi_1(1)+\chi_2(1)=
\lambda^G(1)=|\LL|$, which is not the case.

Since $\psi_1\ne\psi_2$ we are left with the possibility 
$\chi_1=\chi_2\in \{ \psi_1,\psi_2 \}$, say $\chi_i=\psi_1$.  
Let $\phi\colon V_1\to V_2$ be a $G$-isomorphism.  
Since  $\lambda$ has multiplicity $1$ in $\psi_1$ the morphism
 $\phi$ sends~the unique submodule of $(V_1)_H$  affording  $\lambda$
to the unique submodule of  $(V_2)_H$  affording  $\lambda$.
Thus $v_1\phi=av_2$ with $a\in \CCC^*$.
Then  
$\langle v_1g+v_2g\mid g\in G \rangle
=\langle v_1g+a^{-1}v_1\phi  g\mid g\in G \rangle =V_1(1+ a^{-1}\phi )$,
showing
$\langle {\LL} \rangle \subseteq V_1(1+ a^{-1}\phi )\oplus V_3\oplus \cdots
\oplus V_r$. This
contradicts the fact that  $\LL$ spans~$V$.
\end{proof}

\begin{remark}
\label{RIrreducible}
 $\lambda$ is a
\emph{non-trivial} character for
$1< d< n-1$ ($((1_H)^G,1_G)=1$ by
Frobenius reciprocity) and clearly $V'$ contains a $2$-transitive line set too.

\end{remark}


\section{Examples of $2$-transitive  line sets}
\label{Examples}

In this section we describe the examples of Theorem~\ref{Main}.
See~\cite{Ho} and~\cite{Zhu} for Theorem~\ref{Main}(i,ii).

\begin{example}
\label{OrthOddDim}
{\bf  for Theorem~\ref{Main}(iii)}
Let $m>1$.
Let $E  = \FFF_2^{2m+1}$  be an ${\rm O}(2m+1,2)$-space  with radical $R$  \cite[pp.~55,~143]{Ta1}.
Then $S:=   {{\rm O}(2m+1,2)}\simeq {\rm Sp}(2m,2)={\rm Sp}(E/R)$ is transitive on the 
  $d:= 2^{m-1}{(2^m-1)}$ hyperplanes of $E$ of type ${\rm O}^-(2m,2)$
and on the $2^{m-1}(2^m+1)$ hyperplanes  of type ${\rm O}^+(2m,2)$ \cite[p.~139]{Ta1}.
 Label the standard basis elements of $V= \CCC ^d$ as $v_M$ with $M$ ranging over  the first of these sets of hyperplanes. 
 Let $S$ act on this basis as it does on these hyperplanes.
 This action is 2-transitive (as observed implicitly for line-sets in \cite{Ta2} and
 first observed in \cite{Di}), so the only irreducible
 $S$-submodules of $V$ are $\langle \bar v\rangle $  and $\bar v^\perp$, 
 where   $\bar v:=\sum_Mv_M$.

Each such $M$   is the kernel of  a  unique character $\lambda_M\colon E\to \{\pm1\}$.   Let $e\in E$ act on $V$ by
   $v_Me:= \lambda_M(e)v_M$ for each basis vector $v_M$.  
 If  $1\ne r\in R $ then 
 $\lambda_M(r)=-1$  since  $r\notin M$, so $r$ acts  as $-1$ on $V$. 
 If  $e\in E$ and $h\in S$ then  
 \vspace{2pt}
  $  (\bar v e )h=  \bar v h\cdot h^{-1}eh
  = \bar v e^h$,  so $S$ acts on  
  $\langle \bar v\rangle E$, a  set  of 1-spaces of $V$.
 Since $S$ is irreducible on $\bar v^\perp$, the set 
 $\langle \bar v\rangle E=\langle \bar v\rangle {ES}$ spans $V$
 and  $\langle \bar v\rangle$ is the only 1-space fixed by $S$.
In particular $\langle \bar v\rangle$ affords the unique
involutory linear character $\lambda$ of $H=R\times S$ whose kernel is $S$.
Clearly  $(E/R)\semi  S$ acts 2-transitively on the $n=2^{2m}$  cosets of $S$. 
These are the $d$-dimensional examples in Theorem~\ref{Main}(iii). The $2^{m-1}{(2^m+1)}$
 hyperplanes of type ${\rm O}^+(2m,2)$ produce
similarly the   $(n-d)$-dimensional examples. 
\end{example}

\begin{example}
\label{Weil}
{\bf for Theorem~\ref{Main}(iv)}
Let $p>2$ be a prime, $m$ a positive integer
and  $E$ be an extraspecial group of  order $p^{1+2m}$
and exponent $p$.
Using Lemma~\ref{Ex} we consider $E$ as a subgroup
of ${\rm U}(W)$, $W$ a unitary space of dimension $p^{m}$.
By~\cite{BRW} the normalizer of $E$ in ${\rm U}(W)$
contains a subgroup $G=E\semi S$, $G/E\simeq {\rm Sp}(2m,p)$
inducing ${\rm Sp}(2m,p)$ on $E/Z(E)$, with $ES$  acting
  2-transitively on the  $n=p^{2m}$    cosets of $S$. 
Moreover,  $Z(S)=\langle z\rangle$ has order 2, and
 $W=W_+\perp W_-$ for the  eigenspaces  
$W_+$ and $ W_-$ of  $z$ 
of respective dimensions  $(p^m-1)/2$ and $(p^m+1)/2$;  
these are irreducible $S$-modules   (\!{\em Weil modules})  \cite{BRW,Ge}.

Let $U$ be
one of these eigenspaces
say of dimension $d$. As $G/E\simeq S$ we can consider $U$
as a $G$-module. Define
 $V:= W\otimes U^* \subset W\otimes W^*$
($U^*$ dual to $U$).
If $\chi $ is the character of $S$ on $U$ then $\chi \bar\chi $ 
is the character of $S$ on $U\otimes U^*$.   Trivially, 
$(\chi\, \bar\chi,1_S)=(\chi ,\chi)=1$,  so there is a unique 1-space
$\langle v_0\rangle $ in $U\otimes U^*$ (and hence in $V$) fixed pointwise by $S$
(and it is the only   1-space fixed by  the simple group $S$).
In particular $\langle  v_0\rangle$ affords a non-trivial
 linear character $\lambda$ of $H=Z(E)\times S$ with kernel $S$.
Since $E$ is irreducible on $W$ while $S$ is irreducible on $U^*$,
the set $\langle v_0\rangle ^{ES}$ spans $V$.
These are the examples in Theorem~\ref{Main}(iv).

\end{example}


\begin{lemma}
\label{Unique}

Let  $\LL$ be a line set of size $n=p^{2m}$, $p$ a prime, in a unitary
space $V$ of dimension $1<d\leq n/2$.
Let $G\leq \unA$ induce a $2$-transitive action
on $\LL$. Assume further  $G\simeq
E\semi S$, $S\simeq {\rm Sp}(2m,p)$ with $E$ elementary abelian of order $2^{1+2m}$ 
for $p=2$ and $E$ is extraspecial of order  $ p^{1+2m}$ for $p>2$.
Then $\LL$ is equivalent to a line set of Example~\ref{OrthOddDim} 
or~\ref{Weil}.

\end{lemma}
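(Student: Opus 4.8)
The goal is to prove a uniqueness statement: any line set $\LL$ of size $n=p^{2m}$ carrying a $2$-transitive $G=E\semi S$ with $S\simeq{\rm Sp}(2m,p)$ (and the prescribed $E$) must be equivalent to the one constructed in Example~\ref{OrthOddDim} or~\ref{Weil}. The plan is to recover $\LL$ from purely group- and representation-theoretic data that is forced by the hypotheses, thereby matching it with the explicit construction.

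First I would fix a line $\ell=\langle v\rangle\in\LL$ and set $H=G_\ell$. Since $G$ is $2$-transitive on $\LL$ with $E$ regular normal, the point-stabilizer $H$ is a complement to $E$, so $H\simeq S$ up to the center; by Remark~\ref{AutGroup} we in fact have $H=Z(G)\times S$ with $Z(G)=E\cap Z$. The line $\ell$ affords a linear character $\lambda$ of $H$, and by Remark~\ref{RIrreducible} this $\lambda$ is nontrivial; its kernel must contain the perfect group $S$, so $\lambda$ is the unique nontrivial character of $H$ that is trivial on $S$. The whole line set is then $\LL=\{\ell g\mid g\in G\}=\langle v\rangle^{ES}=\langle v\rangle^{E}$, so $\LL$ is completely determined once I know the $G$-module $V$ together with the $H$-fixed (up to $\lambda$) vector $v$. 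Thus the problem reduces to showing that $(V,\langle v\rangle)$ is forced to be the one appearing in the examples.

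Next I would identify the module $V$. By Proposition~\ref{Irreducible}, $V$ is a simple $G$-module in which $\lambda$ occurs in $(V)_H$ with multiplicity one, and $V$ is one of exactly two constituents of the induced module $W$ affording $\lambda^G$; moreover $1<d\le n/2$. The representation theory of $E$ (Lemma~\ref{Ex}) pins down the faithful irreducible constituents: for $p>2$, $E$ extraspecial of exponent $p$ has its faithful irreducibles of dimension $p^m$, and on restriction to $S\simeq{\rm Sp}(2m,p)$ these break into the two Weil modules $W_\pm$ of dimensions $(p^m\mp1)/2$; the central involution $z\in Z(S)$ separates them. For $p=2$, $E$ is elementary abelian and $V$ is an $\FFF_2$-type configuration: here $V$ must be one of the two irreducible $S$-summands of the permutation module on the hyperplanes of $E$ of a fixed orthogonal type, of dimensions $2^{m-1}(2^m\mp1)$. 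In either case the constraint $1<d\le n/2$ together with the multiplicity-one occurrence of $\lambda$ singles out $V$ up to isomorphism, matching the dimension $d$ to exactly one of the two modules constructed in the examples. The uniqueness of the faithful irreducible $E$-module (up to the automorphism twist $\gamma_j$ of Lemma~\ref{Ex}) is what guarantees there is essentially no freedom.

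Finally I would show the \emph{configuration} is unique, not merely the module. Given $V$ fixed as above, the one-dimensional $\lambda$-eigenspace of $H$ in $V$ is unique (multiplicity one, Proposition~\ref{Irreducible}(c)), so $\langle v\rangle$ is determined, and hence $\LL=\langle v\rangle^E$ is determined as a set of lines inside a fixed $V$; any two such are related by a $G$-module isomorphism, which is a unitary transformation after rescaling the invariant Hermitian form (unique up to scalar by Schur's lemma on the simple unitary module). This yields equivalence of line sets in the sense defined in the introduction. The main obstacle I expect is the step of \emph{realizing the abstract module $V$ inside a unitary space in a way compatible with the group action}: I must verify that the $G$-invariant Hermitian form exists and is unique up to a positive scalar, so that the abstract $G$-isomorphism between a given $\LL$ and the example's line set is genuinely unitary rather than merely linear. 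Schur's lemma gives uniqueness of the form up to scalar, and self-duality of the relevant Weil/orthogonal modules (paired under $z$, or self-dual in the $p=2$ case) gives existence; reconciling the sesquilinearity with the automorphism twists $\gamma_j$ of Lemma~\ref{Ex}, and checking that the rescaling can be chosen to make the map unitary, is where the care is needed.
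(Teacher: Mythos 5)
Your overall strategy is essentially the paper's: realize $V$ as one of the two irreducible constituents of the induced module affording $\lambda^G$ (Proposition~\ref{Irreducible}), use the bound $1<d\le n/2$ to single out the constituent matching the example, use the multiplicity-one occurrence of $\lambda$ to pin down the line $\ell$, and upgrade the resulting $G$-isomorphism to a unitary equivalence by Schur's lemma. Your closing discussion of the invariant Hermitian form makes explicit a point the paper leaves implicit, which is a plus. However, two reductions that the paper carries out are missing or incorrect in your write-up, and both are needed before the module-theoretic comparison can even begin.

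First, for $p>2$ the group $H=Z(E)\times S$ has $p-1$ nontrivial linear characters with kernel containing $S$, not one; your assertion that $\lambda$ is ``the unique nontrivial character of $H$ trivial on $S$'' is false (and the justification via ``the perfect group $S$'' also fails for ${\rm Sp}(2,3)$ and ${\rm Sp}(4,2)\simeq S_6$). Different choices of $\lambda$ give non-isomorphic induced modules $\lambda^G$, each containing $G$-invariant line sets (cf.\ Remark~\ref{RUnique}(b)), so one must prove these are all equivalent. The paper does this by invoking the automorphism $\gamma_j$ of $E$ from the last assertion of Lemma~\ref{Ex} to normalize $\lambda$ on $Z(E)$; you name $\gamma_j$ in passing but never perform this normalization. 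Second, comparing your abstract $(G,H,\lambda)$ with the example's requires an isomorphism $G\to\tilde G$ carrying the line stabilizer to the example's line stabilizer, equivalently carrying the complement $S$ of $E$ to $\tilde S$. This needs the fact that complements of $E$ in $G$ form a single orbit under ${\rm Aut}(G)$: the paper proves it by a Sylow argument on $Z(S)$ for $p>2$ and by \cite[(17.7)]{As} for $p=2$, $m>2$, with the case $m=2$ requiring the replacement of $S$ by $S'$. Your proposal silently assumes this alignment. Without these two steps the claim that the pair $(V,\langle v\rangle)$ is ``forced'' by the hypotheses is not established.
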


\begin{proof}
For $i=1,2$ let $\LL_i\subseteq V_i$ be line sets in unitary spaces,
$1< \dim V_i\leq n/2$.
Let $G\simeq G_i=E_i\semi S_i \leq {\rm U}(V_i)$ be a finite group
acting $2$-transitive on $\LL_i$.
Let $\ell_i\in \LL_i$ and $H_i=(G_i)_{\ell_i}$.

\medskip

{\sc Claim:} $\LL_1$ is equivalent to $\LL_2$, in
particular the assertion of the Lemma holds.

\medskip

By Proposition~\ref{Irreducible} and Remark~\ref{RIrreducible} the representation $\lambda_i$
of $H_i$ on $\ell_i$ is a non-trivial linear character of $H_i$.
We have $H_i=Z_i\times S_i$, $Z_i=Z(G_i)$, $S_i\simeq {\rm Sp}(2m,p)$.
Let $\alpha : G_1\to G_2$ be an isomorphism.

\medskip 

{\sc Case $p>2$.} The group
 $S_i$ is a representative of the unique class
 of complements
of $E_i$ in $G_i$ (note that $S=C_{G}(Z(S))$
and $Z(S)$
is a Sylow 2-subgroup of $E\semi Z(S))\unlhd G$).
So we can assume $H_2=H_1\alpha$, $S_2=S_1\alpha$.
We also know $S_i=\ker \lambda _i$ by Lemma~\ref{Cases} below.
By Lemma~\ref{Ex} there exists an automorphism $\gamma$
of $G_1$ such that $\lambda _1(z)=\lambda_2(z\gamma\circ \alpha)$
for $z\in Z$. So replacing if necessary $\alpha$ by $\gamma\circ \alpha$
we may assume that $\lambda _1(z)=\lambda_2(z\alpha)$ holds.
Define a representation
$D: G_1\to {\rm GL}(V_2)$ by
$$
v_2D(g)=v_2 (g\alpha), \quad v_2\in V_2,\quad g\in G_1.
$$
Let $W$ be the module associated with the induced character
$\lambda_1^{G_1}$. By Proposition~\ref{Irreducible} both $G_1$-modules
are isomorphic to the same irreducible submodule of
$W$ (use the restriction on the dimension
and that one of the line sets may come
from the example), i.e. $V_1\simeq V_2$. Hence there exists a $G_1$-morphism
$\phi: V_1\to V_2$ with $\ell_1\pi= \ell_2$ 
($\lambda_1$ has multiplicity $1$ in $V_1$ and $V_2$).
The claim holds for $p>2$.

\medskip 

{\sc Case $p=2$.}  Assume first $m>2$.
$S_2$ and $S_1\alpha$ are 
 complements of $E_2$ in $G_2$.
 By~\cite[(17.7)]{As} there exist $\beta \in {\rm Aut}(G_2)$ with
 $S_2=(S_1\alpha)\beta$.
So replacing $\alpha$ if necessary by $\alpha \circ \beta$
we can assume $H_1\alpha =H_2$ and $S_1\alpha =S_2$.
Note that $H$ has precisely one non-trivial
linear character.
Now arguing as in the case $p>2$
we see that $\LL_1$ and $\LL_2$ are equivalent.

In case $m=2$ replace $S$ by $S'$ (and $S_i$ by $S_i'$). Then
the argument from  case $m>2$ carries over
and shows the equivalence of $\LL_1$ and $\LL_2$.
\end{proof}

\begin{remark}
\label{RUnique}
(a)  If we replace  in Lemma~\ref{Unique} 
the condition $1<d\leq n/2$ by $n/2\leq d< n-1$
we get (by symmetry) again the uniqueness assertion
of the Lemma.
(b)  For \emph{any} linear character $\lambda$ of $H$ with kernel $S$
the constituents of the module associated with
$\lambda^G$ contain $G$-admissible line sets.

\end{remark}


\section{Proof of Theorem~\ref{Main} and automorphism groups}
\label{proof}

In this section $p$ is a prime and
${\LL}$ denotes a set of $n=p^t$ equiangular lines
in a an unitary space
$V$ of dimension $d$ such that $G\leq \unA$
is a finite group with a $2$-transitive action on $\LL$.
We also assume $n\neq 4$ as for $n=4$
\cite{Zhu} implies Assertion (i) of Theorem~\ref{Main}.
Using the results of Subsection~\ref{Basic properties}
 we  assume that $1<d < n-1$, $V$ is a simple
$G$-module and $G/Z$, $Z=Z(G)$ has a regular normal
subgroup.
 It is sufficient to assume that no proper subgroup of
$G/Z$ has a $2$-transitive action on ${\LL}$
and that no subgroup of $\unA$, 
which covers the quotient
$GZ/Z$, has  order $<|G|$.
We set
 $H=G_{\ell}$, $\ell\in {\LL}$.
Then the character/representation $\lambda : H\to {\rm U}(\ell)$ 
 of $H$ on $\ell$ is non-trivial by Remark~\ref{RIrreducible}.
Observe that there is some flexibility in the choice of
$G$: generators of $G$ can be adjusted by scalars. We show that
$G$ can be chosen such that $G\leq \tilde G$
where $\tilde G$ is a group which is used to construct
a line set in Examples~\ref{OrthOddDim} and~\ref{Weil}.


\begin{lemma}
\label{Cases}

We may assume
 $G=E\semi S$,
 $H=Z\times S$, where
 $S$ is the kernel of the action of $H$ on $\ell$. Moreover, $Z\leq E$
 and:

\begin{enumerate}
	\item[(a)] 
	$p=2$, $E$ is an elementary abelian $2$-group
 and $E$ as an $S$-module satisfies Hypothesis (I).
	
	\item[(b)]  $t=2m$,
	  $E$ satisfies
	Hypothesis (E) and $E/Z(E)$ is a simple $S$-module.
	
\end{enumerate}

\end{lemma}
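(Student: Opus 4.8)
The plan is to recover the structure of $G$ from its image $\bar G:=G/Z$ in the projective group. Since $V$ is a simple $G$-module, Schur's Lemma forces $Z=Z(G)$ to act by scalars, so $Z$ is cyclic; and because a scalar acts on the line $\ell$ as multiplication by itself, the restriction $\lambda|_Z$ is \emph{faithful}. By hypothesis $\bar G$ is $2$-transitive of affine type, so Lemma~\ref{2tran} applies: $\bar G=\bar E\semi G_0$ with $\bar E=O_p(\bar G)$ elementary abelian regular of order $p^t$ and $G_0\cong H/Z$ a point stabilizer acting transitively on $\bar E\setminus\{0\}$; in particular $\bar E$ is a \emph{simple} $G_0$-module, since a nonzero submodule would contain, hence consist of, all nonzero vectors. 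Let $E\unlhd G$ be the full preimage of $\bar E$. Then $Z\le E$, and $E\cap H$ maps to $\bar E\cap G_0=1$, so $E\cap H=Z$; moreover $E/Z\cong\bar E$ is elementary abelian, whence $E$ is nilpotent of class $\le 2$ with $E'\le Z$ and $x^p\in Z$ for all $x\in E$.

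The first goal is the splitting $H=Z\times S$ with $S:=\ker\lambda$. Faithfulness of $\lambda|_Z$ gives $Z\cap S=1$, so it remains to prove $\lambda(H)=\lambda(Z)$. Here I would invoke the minimality assumption on $G/Z$ to take $G_0$ minimal subject to transitivity on $\bar E\setminus\{0\}$; in the surviving cases the minimal such subgroups (the quasi-simple groups ${\rm Sp}(2a,2^b)'$, ${\rm G}_2(2^b)'$, ${\rm SL}(s,q)$, and the symplectic groups over odd fields) are perfect, so $H=Z[H,H]$ and $\lambda$, which vanishes on $[H,H]$, satisfies $\lambda(H)=\lambda(Z)$. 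Thus $|H/S|=|Z|$ and $H=Z\times S$ with $S\cong G_0$. Counting then gives $G=E\semi S$: indeed $|E||S|=|Z|\,p^t\,|G_0|=|G|$, while $E\cap S\le E\cap H=Z$ and $Z\cap S=1$ force $E\cap S=1$. The remaining entries of Lemma~\ref{2tran} -- chiefly type~(1), $G_0\le\Gamma{\rm L}(1,p^t)$, and the small sporadic cases (6),(7) -- I expect to discard using the numerical constraints of Proposition~\ref{Irreducible} (the splitting $\lambda^G=V\oplus V'$ into two inequivalent simples, each affording $\lambda$ with multiplicity one), together with the exclusion of ${\rm A}_7$ already obtained inside the proof of Lemma~\ref{NonVan}.

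With $G=E\semi S$ in hand, the dichotomy (a)/(b) comes from the commutator pairing $c\colon\bar E\times\bar E\to Z$, $c(\bar x,\bar y)=[x,y]$, an alternating $G_0$-invariant form whose values lie in the $p$-torsion $Z[p]$ (of order $1$ or $p$). Its radical is a $G_0$-submodule of the simple module $\bar E$, hence $0$ or $\bar E$. If $c\equiv0$ then $E$ is abelian; after the scalar normalization described below it may be taken elementary abelian, so that it becomes an $\FFF_pS$-module with a trivial one-dimensional submodule $Z$ and with $S$ transitive on $(E/Z)\setminus\{0\}$, and indecomposability -- forced by simplicity of $V$ and the fact that $\LL$ spans $V$ -- shows $E$ satisfies Hypothesis~(I); Lemma~\ref{NonVan} then forces $p=2$, giving~(a). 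If instead $c$ is non-degenerate, then $t=2m$ is even, $G_0$ preserves a symplectic form (so $G_0\le{\rm Sp}(2m,p)$), the vanishing of the radical yields $Z=Z(E)$ and hence $E/Z(E)=\bar E$ a simple $S$-module, and $E$ is of symplectic type; this is~(b).

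The last and most delicate task is the scalar normalization that puts $E$ into the exact shape of Hypothesis~(E). Exploiting the freedom $\unA=G\,Z({\rm U}(V))$ to rescale generators, I would replace $E$ by a rescaled subgroup $E^\star$ with $E^\star Z({\rm U}(V))=E\,Z({\rm U}(V))$ and with $Z(E^\star)$ trimmed down to $Z[p]$ of order $p$ for $p>2$, respectively to the cyclic group of order $4$ for $p=2$ (the latter being forced by the nonzero $G_0$-quadratic refinement $\bar x\mapsto x^2$ of $c$ and by $\lambda|_Z$ faithful, so $z$ acts as $\zeta_4$ as in Lemma~\ref{Ex}(a)). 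Once $|Z(E)|=p$ for $p>2$, exponent $p$ is automatic: in a class-$2$ group the map $x\mapsto x^p$ is a homomorphism (as $\binom{p}{2}\equiv0\pmod p$ and $E'=Z(E)$ has exponent $p$) that factors through a $G_0$-map from the simple nontrivial module $\bar E$ to the trivial module $Z(E)$, hence is zero. \textbf{The main obstacle} is precisely this normalization -- shrinking the cyclic scalar group $Z$ to its $p$-torsion by a consistent rescaling of generators that respects the finite group law and keeps $G\le\tilde G$ inside an Example group -- since it is here that the interaction between $Z=Z(G)$, the full scalar group $Z({\rm U}(V))$, and the central character of $E$ on $V$ (controlled by Lemma~\ref{Ex}) must be handled with care.
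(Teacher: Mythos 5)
Your reduction to $\bar G=\bar E\semi G_0$ and the commutator-pairing dichotomy between (a) and (b) are reasonable, but the proposal has a genuine gap at its first essential step, the splitting $H=Z\times S$. You derive $\lambda(H)=\lambda(Z)$ from $H=Z[H,H]$, i.e.\ from perfectness of $G_0\cong H/Z$. This fails in cases you cannot discard: for $n=3^2$ (Theorem~\ref{Main}(iv) with $p=3$, $m=1$, which Lemma~\ref{Cases} must cover, since only $n=4$ is excluded) the relevant $G_0$ is ${\rm Sp}(2,3)\simeq{\rm SL}(2,3)$, which is solvable, and the minimal subgroup transitive on $\bar E\setminus\{0\}$ is $Q_8$, likewise not perfect; case (1) of Lemma~\ref{2tran} ($G_0\le\Gamma{\rm L}(1,p^t)$) and cases (6)--(8) are also non-perfect, and you defer their elimination to unspecified ``numerical constraints'' which in the paper only become available \emph{after} the structure of $E$ as an $S$-module is established (via Lemma~\ref{NonVan} and the later lemmas), so the argument is circular where it is not simply false. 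The paper obtains the complement without any perfectness: $H/\ker\lambda$ is cyclic, generated by some coset $c\ker\lambda$, and one replaces $c$ by $\omega c$ for a suitable unit scalar $\omega\in Z({\rm U}(V))$ so that $\omega c$ fixes $\ell$ pointwise; then $S:=\langle \omega c,\ker\lambda\rangle$ is finite, acts trivially on $\ell$, and is a complement to $Z$ in the stabilizer inside the modified group $ES\le\unA$, which still induces the same $2$-transitive action. The ambient infinite scalar group, not the group theory of $G_0$, is what makes the complement exist.

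Second, you yourself flag as ``the main obstacle'' the normalization that shrinks $Z(E)$ to order $p$ (resp.\ produces the central product with a cyclic group of order $4$ when $p=2$) and, in the abelian case, makes $E$ elementary abelian --- and you do not carry it out; you also never split off the $p'$-part of $Z$ from your preimage $E$. In the paper none of this is done by rescaling: it follows from the minimality hypothesis on $|G|$, by replacing $E$ with the characteristic subgroup $\Omega=\langle e\mid |e|=p\rangle$ in the abelian case, and with the characteristic subgroup $F$ (extraspecial or satisfying (E)) of a group of symplectic type in the non-abelian case, minimality forcing equality; the same minimality, not simplicity of $V$ as you assert, is what rules out an $S$-invariant complement to $Z$ in $E$ and hence gives the indecomposability required for Hypothesis (I). Until these two points are repaired the proposal does not establish the lemma.
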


\begin{proof}
Let $M$ be the pre-image of the regular, normal subgroup of $G/Z$.
Since $M/Z$ is abelian we have $M=E\times Z_{p'}$ with a Sylow $p$-subgroup $E$
of $M$ and $Z_{p'}$ is the largest subgroup of $Z$
with an order coprime to $p$. 
Let $L$ be the kernel of $\lambda$.

We may assume $E=M$, $Z\leq E$ and that
 $S=L$ is a complement of $Z$ in $H$:
Clearly, $Z\leq H$ and $L\cap Z=1$.
As $H/L$ is cyclic we can choose $c\in H$ such
that $H=\langle c,L\rangle$. Pick $\omega$ in $\CCC$ of norm $1$
such that $S=\langle \omega c,L\rangle$ has a trivial action
on $\ell$. Clearly, $S$ is finite and 
$\widetilde{G}=ES$ induces a $2$-transitive action on  $\LL$.
Moreover $S\cap E\leq S\cap (\widetilde{G}_\ell \cap E)
\leq S\cap Z({\rm U}(V)) =1$.
So we may assume $G=\widetilde{G}$ and $H=(E\cap Z)\times S$.
In particular $Z\leq E$ as $Z\leq H$.

Assume first that $E$ is abelian. 
Set $\Omega =\langle e\in E \mid |e|=p\rangle$. 
This group is a characteristic elementary abelian
subgroup of $E$.
If $\Omega \leq Z$ then $E$ is cyclic,
and $S\neq 1$ is a $p'$-group (isomorphic  isomorphic
to a subgroup of ${\rm Aut}(E)$
of order $p-1$). 
By Remark 2.7   $Z\neq 1$.
This contradicts~\cite[(23.3)]{As} (Automorphism groups of cyclic groups).

So $E = \Omega (Z\cap E)$  and
by the minimal choice of $G$ we obtain $E=\Omega$.
If $Z$ has a $S$-invariant complement $E_0$
in $E$ then by induction $G=E_0S$ contradicting $Z\neq 1$.
So $1< Z <E$ is the unique composition series of $E$ as an $S$-module
and 
Assertion (a) follows.

Assume now that $E$ is non-abelian.
If $N$ would be a characteristic, normal, abelian subgroup
of $E$ of rank $\geq 2$, then 
 $1< NZ/Z \leq E/Z$ is an $S$-invariant series.
By our minimal choice $N=E$, which is absurd.
So $E$ is of symplectic type and therefore by~\cite[(23.9), p. 109]{As}
$E=C\circ E_1$ where $E$ is extraspecial or $=1$ and
$C$ is cyclic or $p=2$ or $C$ is 
a generalized quaternion group, a dihedral
group or a semidihedral group of order $\geq 16$.

Suppose $p>2$. By~\cite[(23.11)]{As} is extraspecial
$E$ of exponent $p$.
Assertion (b) follows for $p>2$.

Suppose finally $p=2$. A standard reduction 
(see for instance~\cite[Lem. 5.12]{Th}) shows that $E$ contains
a characteristic subgroup $F$ such that
 $F$ is extraspecial of order $2^{1+2m}$ or satisfies hypothesis (E).
 By our choice of $G$ we have $E=F$ as $t=2m>2$.
If $E$ is extraspecial then
$S$ can not act transitively
on the non-trivial elements of $E/Z(E)$ as there are
cosets of elements of order $4$ as well as cosets of elements of order
$2$. Assertion (b) holds for $p=2$ too.
\end{proof}


By Lemma~\ref{Cases} we distinguish the cases  $E$ abelian ($p=2$),
 $E$ non-abelian, $p>2$ and  $E$ non-abelian, $p=2$. Then
 Lemmas~\ref{AbelianOdd} and~\ref{Even}  complete the
 proof of Theorem~\ref{Main}.
The proof of Lemma~\ref{AbelianOdd} is very similar to the proof
of Lemma~\ref{Unique}.

\begin{lemma}
\label{AbelianOdd}
\begin{enumerate}
	\item[(a)] Let $E$ be abelian. Then (iii) of Theorem~\ref{Main} holds.
	\item[(b)] Let $E$ be non-abelian and $p>2$. Then (iv) of Theorem~\ref{Main} holds.
\end{enumerate}

\end{lemma}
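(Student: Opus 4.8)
The plan is to determine, in each case, the decomposition of the $G$-module $W$ affording $\lambda^G$ by restricting it to the normal subgroup $E$ and applying Clifford theory, read off the two constituent dimensions, and then deduce uniqueness from Lemma~\ref{Unique}. Throughout I use that $G=E\semi S$ and $H=Z\times S$ with $S=\ker\lambda$ (Lemma~\ref{Cases}), that $\lambda|_Z$ is a non-trivial character of $Z$ (Remark~\ref{RIrreducible}), and crucially that $W=V\oplus V'$ is a sum of \emph{exactly two} inequivalent irreducible $G$-modules (Proposition~\ref{Irreducible}). Since $G=EH$, a single double coset occurs in Mackey's theorem and $W|_E=\mathrm{Ind}_Z^E(\lambda|_Z)$, a module of dimension $n=[E:Z]$ all of whose $E$-constituents have central character $\lambda|_Z$ on $Z$. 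The irreducible $G$-constituents of $W$ are then in bijection with the $S$-orbits (respectively $S$-constituents) occurring in this restriction, and by Proposition~\ref{Irreducible} their number is exactly two. This forces $d+d'=n$ and, once the two constituents are identified, pins down $\{d,d'\}$.

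For (a), $E$ is elementary abelian and, as an $S$-module, satisfies Hypothesis~(I); Lemma~\ref{NonVan} leaves only $\dim(E/Z)=2m$ with $S\simeq{\rm Sp}(2a,2^b)'$ or ${\rm G}_2(2^b)'$, and $\dim(E/Z)=3$ with $S\simeq{\rm SL}(3,2)$. Because $E$ is abelian, $W|_E=\bigoplus_\mu\mu$ runs multiplicity-freely over the set $\mathcal X$ of the $n$ characters of $E$ non-trivial on $Z$, and the two $G$-constituents correspond to the two $S$-orbits on $\mathcal X$. I would first exclude ${\rm SL}(3,2)$: here $|\mathcal X|=8$, the $S$-action on $\mathcal X$ is fixed-point-free because the extension $E$ is non-split (so $C_{E^*}(S)=0$), and since ${\rm SL}(3,2)$ is simple of order $168$ it has no proper subgroup of index below $7$; hence the only fixed-point-free orbit pattern on $8$ points is a single orbit, making $W$ irreducible and contradicting Proposition~\ref{Irreducible}. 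In the surviving cases $m>1$, and after passing to $S^\star\simeq{\rm Sp}(2m,2)$ via Lemma~\ref{NonVan}(d) the set $\mathcal X$ is identified with the hyperplanes of the ${\rm O}(2m+1,2)$-space $E$ not containing the radical $Z$, equivalently with the quadratic forms polarizing to the symplectic form. These comprise exactly the two ${\rm Sp}(2m,2)$-orbits of $+$- and $-$-type, of sizes $2^{m-1}(2^m+1)$ and $2^{m-1}(2^m-1)$, so $\{d,d'\}=\{2^{m-1}(2^m\pm1)\}$ and $n=2^{2m}$; this is Theorem~\ref{Main}(iii), the case $m=1$ being the excluded $n=4$.

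For (b), $E$ is extraspecial of order $p^{1+2m}$ and exponent $p$ with $E/Z$ a simple $S$-module; the commutator form makes $S\le{\rm Sp}(2m,p)$, and Lemma~\ref{2tran} together with the two-constituent constraint identifies the relevant symplectic group (the one-dimensional and small sporadic entries of Lemma~\ref{2tran} either split the Weil module into more than two pieces, and are excluded, or reproduce the same line set). By Lemma~\ref{Ex}(b) there is a unique faithful irreducible $E$-module $D_j$ with central character $\lambda|_Z$, of dimension $p^m$, so $W|_E=p^m\,D_j$ and the multiplicity space $\mathcal M=\mathrm{Hom}_E(D_j,W)$ is the $p^m$-dimensional Weil module. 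Its two constituents are the $\pm1$-eigenspaces of the central involution of ${\rm Sp}(2m,p)$, of dimensions $(p^m+1)/2$ and $(p^m-1)/2$, irreducible by~\cite{BRW,Ge}; since $\mathcal M$ has exactly two $S$-constituents and these refine the two non-isomorphic pieces, they coincide with them. Hence $\dim V=p^m\cdot\dim\mathcal M_\pm=p^m(p^m\pm1)/2$ and $n=p^{2m}$, which is Theorem~\ref{Main}(iv).

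In both parts the uniqueness of $\LL$ up to equivalence follows from Lemma~\ref{Unique} applied to $\widetilde G=E\semi S^\star$ with $S^\star\simeq{\rm Sp}(2m,p)$, noting for $p=2$ that one passes from the possibly smaller $S$ to $S^\star$ via Lemma~\ref{NonVan}(d) and that the line set $\langle v_0\rangle^E$ depends only on $E$, not on the chosen complement; the same observation shows the full symplectic group is realized in ${\rm Aut}\,\LL$, as required for Remark~\ref{AutGroup}. The step I expect to be the main obstacle is the clean determination of the two constituent sizes, namely the identification of $\mathcal X$ with the two orthogonal hyperplane types in (a) and of $\mathcal M$ with the Weil module and its eigenspace dimensions in (b), together with disposing of the degenerate groups (${\rm SL}(3,2)$ in (a) and the non-symplectic or small entries of Lemma~\ref{2tran} in (b)) by means of the fact that $W$ has exactly two irreducible constituents.
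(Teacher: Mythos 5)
Your Clifford-theoretic computation of the constituent dimensions is essentially correct and is a genuinely different route from the paper's: you restrict $W=\lambda^G$ to $E$, count $S$-orbits on characters (case (a)) or identify the multiplicity space with the Weil module (case (b)), and use the fact that $W$ has exactly two irreducible constituents to force the orbit/eigenspace partition to match the ${\rm Sp}(2m,p)$ one even when $S$ is a proper subgroup. The paper instead never computes dimensions at all: it embeds $G$ into $\tilde G=\tilde E\rtimes{\rm Sp}(2m,p)$ (via Lemma~\ref{NonVan}(d) for $p=2$ and equation~(\ref{Eaut}) for $p>2$), extends $\lambda$ to $\tilde\lambda$ on $\tilde H$, and uses Mackey to show $((\tilde\lambda)^{\tilde G})_G=\lambda^G$; since both $\tilde W=\tilde V\oplus\tilde V'$ and $W=V\oplus V'$ have exactly two constituents, $V$ must be the restriction to $G$ of one of the example modules, which yields the dimensions, the identification of $\LL$ with the example, and the uniqueness all at once. (Your exclusion of ${\rm SL}(3,2)$ is a correct variant of the paper's.)

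The genuine gap is in your uniqueness step. Lemma~\ref{Unique} has as hypothesis that a group $E\rtimes S^\star$ with $S^\star\simeq{\rm Sp}(2m,p)$ \emph{already sits inside} $\unA$ and acts $2$-transitively on $\LL$; your $G$ only has the possibly smaller $S$, and you have not produced such a subgroup of $\unA$. Your justification --- that ``the line set $\langle v_0\rangle^E$ depends only on $E$, not on the chosen complement'' --- is a statement about the \emph{example's} line set; it does not show that your abstract pair $(V,\ell)$ is $G$-equivariantly isometric to the example's pair $(\tilde V,\tilde\ell)$. Knowing $\dim V$ and $V|_E$ is not enough: the set $\LL=\ell^E$ depends on the position of $\ell$ relative to the $E$-action, and $\ell$ is pinned down only as the fixed line of $S$, so one must match the $S$-module structure of $V$ with the restriction to $S$ of the example's $S^\star$-module. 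That is exactly what the paper's Mackey computation $((\tilde\lambda)^{\tilde G})_G=(\lambda_H)^G$ supplies, and without it (or an equivalent) neither the equivalence of $\LL$ with Examples~\ref{OrthOddDim}/\ref{Weil} nor the claim that the full symplectic group is realized in ${\rm Aut}\,\LL$ is established. Relatedly, in case (b) your dismissal of the non-symplectic entries of Lemma~\ref{2tran} for $p>2$ (cases (1), (6)--(9)) as ``excluded or reproducing the same line set'' is asserted rather than proved; the embedding-plus-Mackey argument is again what handles these uniformly.
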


\begin{proof}
We first eliminate case (b) of Lemma~\ref{NonVan}.
Let $G=E\semi S$, $S\simeq {SL}(3,2)$, $Z=C_E(S)$ and $E/Z$ is the natural $S$-module.
A simple $E$-module in $V$ affords a non-trivial character $\chi$
of $E$ and its kernel $E_ \chi$ is a hyperplane intersecting $Z$ trivially.
There are precisely $8$ such hyperplanes.
The group $S$ acts transitively on these hyperplanes (otherwise ---
as the smallest degree of a non-trivial permutation representation of
$S$ is $7$ --- $S$ would fix one of these hyperplanes
and $E$ would not be an indecomposable $S$-module).
Hence $\dim V\geq 8 =n$, a contradiction.

There exist an embedding $\iota : G\to \tilde G$, 
$\tilde G= \tilde E\semi \tilde S$, $\tilde S\simeq {\rm Sp}(2m,p)$
with $\tilde E=  E\iota$, $S\iota \leq \tilde S$:
This follows from (d) of Lemma~\ref{NonVan} if $p=2$
and for $p>2$ this is clear by Equation~(\ref{Eaut}).
The linear character $\tilde \lambda$ of $H\iota$
defined by  
\begin{equation}
\label{Elam}
\tilde \lambda (h\iota )=\lambda( h ),\quad h\in H
\end{equation}
has a unique extension to $\tilde H= Z\iota \times \tilde S$
such that $\ker \tilde \lambda = \tilde S$.
Let $\tilde W$ be the module associated with the induced
character $(\tilde \lambda )^{\tilde G}$.
By 
Lemma~\ref{Unique} and Remark~\ref{RUnique} (b) we have
a decomposition into simple $\tilde G$-modules
$\tilde W= \tilde V\oplus \tilde V'$ and both modules
contain $\tilde G$-invariant line sets.
We turn $\tilde W$ into a $G$-module by 
$$
\tilde w\cdot g= \tilde w (g\iota),\quad \tilde w \in \tilde W,
\quad g\in G.
$$
By Mackey's Theorem~\cite[Satz V.16.9 (b)]{HB} and Equation (\ref{Elam})
$$
((\tilde \lambda)^{\tilde G})_G =
((\tilde \lambda)_{{\tilde H}\cap G\iota})^G
=(\lambda_{H})^G.
$$
So $\tilde W$ as a $G$-module affords $\lambda^G$.
Then by Proposition~\ref{Irreducible}
$V$ is isomorphic to $\tilde V$ or $\tilde V'$. Say $V\simeq \tilde V$.
Then (see the proof
of Lemma~\ref{Unique}) an isomorphism $\phi : V\to \tilde V$ 
maps the line set
$\LL$ onto a $\tilde G$-invariant line set in $\tilde V$.
\end{proof}



\begin{lemma}
\label{Even}

Let  $E$ be non-abelian and $p=2$. Then (i) or
(ii) of Theorem~\ref{Main} hold.

\end{lemma}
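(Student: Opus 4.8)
By Lemma~\ref{Cases}(b), with $p=2$ and $E$ non-abelian, we know $E$ satisfies Hypothesis~(E): $E$ is the central product of an extraspecial $2$-group of order $2^{1+2m}$ with a cyclic group of order $4$, and $E/Z(E)$ is a \emph{simple} $S$-module with $S\simeq {\rm Sp}(2m,2)$ acting on it via \eqref{Eaut}. So $G=E\semi S$ with $H=Z\times S$, $Z=Z(E)=\langle z\rangle$ of order $4$. The plan is to use the representation theory of Lemma~\ref{Ex}(a) to pin down $\dim V$, and then the classification in Lemma~\ref{2tran} together with the $2$-transitivity of $G$ on $\LL$ to force $m$ to be very small, leaving only the two configurations of Theorem~\ref{Main}(i) and~(ii).

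\textbf{Step 1: identify the action of $E$ on $V$.}
First I would restrict the simple $G$-module $V$ to $E$. Since $Z=Z(E)$ is cyclic of order $4$ and $\lambda$ is a non-trivial linear character of $H=Z\times S$ whose kernel is $S$ (Lemma~\ref{Cases}), the central element $z$ acts on $\ell$ — and hence, by simplicity and Clifford theory, on all of $V$ — as a primitive $4$th root of unity $\zeta_4^j$, $j\in\{1,3\}$. Thus $E$ acts \emph{faithfully} on $V$, so every $E$-irreducible constituent of $V$ is faithful. By Lemma~\ref{Ex}(a) each such constituent has dimension exactly $2^m$ and there are precisely two of them (one for each $j$). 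Clifford's theorem then forces $V|_E$ to be a sum of copies of a single such constituent, whence $2^m\mid d$ and in fact I expect $d=2^m$ (the ``Weil-type'' case), using that $S$ acts irreducibly and that $\lambda$ has multiplicity $1$ in $V$ by Proposition~\ref{Irreducible}(c).

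\textbf{Step 2: bound $m$ via $2$-transitivity.}
With $d=2^m$ and $n=|{\LL}|=2^{2m}$, the constraint $1<d<n-1$ is automatic for $m\ge2$, so the real work is showing that the $2$-transitive action of $ES$ on the $n=2^{2m}$ cosets of $S$, \emph{together with the requirement that the resulting line set be equiangular}, eliminates all but $m=1$ and $m=3$. Here I would feed $S\simeq{\rm Sp}(2m,2)$ into the list of Lemma~\ref{2tran} and check which Weil representations actually yield equiangular configurations — this is exactly the $n=d^2$ situation with $n=4^m$, so I can invoke \cite{Zhu}, which settles $n=d^2$ completely and produces cases~(i), (ii), and the prime case $n=3^2$ of~(iv). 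The point is that for $p=2$ the only surviving pairs are $n=4$ ($m=1$, giving $E=Q_8$, $|S|=3$) and $n=64$ ($m=3$, giving $E=2^{1+6}\circ\mathbb{Z}_4$ and $S\simeq{\rm G}_2(2)'\simeq{\rm PSU}(3,3)$).

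\textbf{The main obstacle.}
The hard part will be Step~2: ruling out the infinite family of intermediate values of $m$. Abstractly, $ES$ acts $2$-transitively on $2^{2m}$ points for every $m$, so $2$-transitivity alone does not suffice — one must use that the one-dimensional character $\lambda$ produces an \emph{equiangular} line set, equivalently that the angle (the off-diagonal inner product) is constant. I would compute this inner product from the matrix entries of the Weil representation $D_j$ of Lemma~\ref{Ex}(a) — for distinct cosets the inner product has absolute value $2^{-m/2}$ (a Gauss-sum / quadratic-character computation on $\mathbb{F}_2^{2m}$), and the equiangularity forces a numerical relation that holds only for $m\in\{1,3\}$. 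Rather than grind through this Gauss-sum estimate directly, it is cleaner to observe that $n=d^2$ here and simply cite \cite{Zhu}, where precisely these $n=d^2$ equiangular line sets are classified and shown to exist only for $d=2$ and $d=8$ (the ``doubly-transitive SIC-POVM'' phenomenon). Matching the group data $E=Q_8$, $|S|=3$ for $d=2$ and $E=2^{1+6}\circ\mathbb{Z}_4$, $S\simeq{\rm PSU}(3,3)$ for $d=8$ against Remark~\ref{AutGroup}(i),(ii) then completes the identification and finishes the proof.
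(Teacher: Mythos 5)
Your endgame is right: once $V$ restricted to $E$ is irreducible one has $d=2^m$, $n=d^2$, and citing \cite{Zhu} finishes the matter — that is exactly what the paper does, and your worry in ``The main obstacle'' about needing a Gauss-sum estimate to eliminate intermediate $m$ is unfounded, since \cite{Zhu} classifies the $n=d^2$ case for all $m$ at once. The genuine gap is earlier, in Step 1, where you write that you ``expect $d=2^m$.'' Clifford theory and Lemma~\ref{Ex}(a) only give you that $V|_E$ is a direct sum of $\ell$ copies of one faithful $2^m$-dimensional constituent, so $d=2^m\ell$; neither the irreducibility of $S$ on $E/Z(E)$ nor the multiplicity-one statement of Proposition~\ref{Irreducible}(c) forces $\ell=1$, and ruling out $\ell>1$ is where essentially all of the work in the paper's proof lies. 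There, one writes the representation as a tensor product $P_1\otimes P_2$ with $P_2$ a projective representation of $S$ of degree $\ell$ (\cite[Satz V.17.5]{HB}), compares the Landazuri--Seitz lower bounds $m_S$ for the minimal nontrivial projective degree of $S$ against the inequality $m_S\, 2^m\le d\le 2^{2m-1}$, and is left with two stubborn cases: $S\simeq{\rm G}_2(4)$, killed by computing that the complementary module $V'$ of Proposition~\ref{Irreducible} would require $S$ to have a projective representation of degree $52$, and $S\simeq{\rm SL}(2,4)$ ($m=2$), killed by a GAP/Magma character computation showing $\lambda^G$ is irreducible. None of this is present in your proposal.

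A secondary inaccuracy: you take $S\simeq{\rm Sp}(2m,2)$, but equation~\eqref{Eaut} only embeds $S$ into ${\rm Sp}(2m,2)$, and the minimal choice of $G$ together with Lemma~\ref{2tran} restricts $S$ (modulo its center) to ${\rm SL}(2,2^m)$ or ${\rm G}_2(2^b)'$ with $m=3b$. This identification is not cosmetic — the projective-degree bounds in the $\ell>1$ analysis are applied to precisely these groups, so the argument cannot be run with ${\rm Sp}(2m,2)$ in their place.
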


\begin{proof} 
By  Proposition~\ref{Irreducible}  we may assume 
$d=\dim V \leq n/2= 2^{2m-1}$.
 As $E$ satisfies Hypothesis (E) $S$ is isomorphic to a subgroup 
of ${\rm Sp}(2m,2)$  (see Equation~(\ref{Eaut})). 
By Lemma~\ref{2tran}
and by the minimal choice of $G$
we have $H/Z(H)\simeq {\rm SL}(2,2^m)$ or $\simeq {\rm G}_2(2^b)'$
and $b=m/3$.
Let $V=V_1\oplus \cdots \oplus V_\ell$ a decomposition
into irreducible $E$-modules, in particular
$d=2^m\ell$. Clearly, all $V_i$ are faithful
$E$-modules. Then a generator of $Z$  induces the same scalar
on each $V_i$ as the eigenspaces of this generator are $G$-invariant.
 Lemma~\ref{Ex} shows that 
all $V_i$'s are pairwise isomorphic.
If $\ell =1$, then $n=2^{2m} =d^2$ 
and an application of the main result of~\cite{Zhu} proves
 the assertion of the Lemma.

So assume $\ell >1$. 
Denote by $D$ the representation of $G$ afforded by $V$
and apply~\cite[Satz V.17.5]{HB}. Then
$D(g)=P_1(g)\otimes P_2(G)$ where the $P_i$'s are irreducible projective
 representations of $G$ and $P_2$ is also a projective representation of 
$S\simeq G/E$ of degree $\ell$. Denote by $m_S$
the minimal degree of a non-trivial projective representation
of $S$. By~\cite[Satz V.24.3]{HB} $m_S$ is the minimal degree
of a non-trivial, irreducible representation of the 
universal covering group
of $S$.
We have $m_S=2^m-1$ for 
$S\simeq {\rm SL}(2,2^m)$, $m>3$~\cite[Tab. 3]{TZ},~\cite{LS},
$m_S=2^m-2^b$ for $S\simeq {\rm G}_2(2^b)$, $m=3b$, 
$b\neq 2$~\cite[Tab. 3]{TZ},~\cite{LS},
$m_S=2$ for $S\simeq {\rm SL}(2,4)$, $m=2$~\cite{AT},
and $m_S=12$ for $S\simeq {\rm G}_2(4)$, $m=12$~\cite{AT}.
Since $m_S2^m\leq d \leq 2^{2m-1}$ only the last two cases
may occur.

For $S\simeq {\rm G}_2(4)$ degree $12$ is the only degree of a
non-trivial, irreducible,
projective
representation of degree  $\leq 64$.
By Proposition~\ref{Irreducible} there exists an irreducible
$G$-module $V'$ such that 
$\dim V'= 2^{12}-d =64\cdot 52$ and $52$ is the degree of
of an irreducible,
projective
representation of $S$, a contradiction.

Assume finally $m=2$. 
It follows from~\cite[Thm. 4]{Gri} that there \emph{exists}
a group $G=E\semi S$, $S\simeq {\rm SL}(2,4)$
and this group is unique up to isomorphism.
Using GAP or Magma 
one can compute characters
of $G$. For $H=Z(E)\times S$
there exist precisely two linear characters of $H$
with kernel $S$.
For any such character
$\lambda$ the induced character $\lambda^G$ is irreducible,
which
 rules out this possibility too.
\end{proof}




\subsection{Automorphism groups}
\label{Further results}
\label{Automorphism groups}
{\noindent \em Proof   of} Remark~\ref{AutGroup}.
For cases (i) and (ii) we refer to~\cite{Ho} and~\cite{Zhu}.
For the remaining two cases we have by the Main Theorem a finite subgroup
$G=E\semi S\leq \unA$, with $|E/(E\cap Z)|=p^{2m}$,  $Z=Z({\rm U}(V))$ and
 $S\simeq {\rm Sp}(2m,p)$.
The assertions follow in cases (iii) and (iv)
if $E/(E\cap Z)$ is normal in ${\rm Aut}\LL$, i.e. if ${\rm Aut}\LL$
has a regular, abelian normal subgroup.
Suppose ${\rm Aut}\LL$ has a non-abelian simple socle then,
by the classification of the $2$-transitive groups
(see~\cite{Ca}),  ${\rm Aut}\LL$ 
is at least triply transitive. In that case the application of 
Proposition~\ref{Irreducible} (to a point stabilizer) forces $\dim V= d=n-1$,
a contradiction. 
\qed


\end{document}